\newtheorem{theorem}{Theorem}[section]
\newtheorem{proposition}[theorem]{Proposition}
\newtheorem{definition}[theorem]{Definition}
\theoremstyle{definition}
\newtheorem{example}[theorem]{Example}
\newtheorem{remark}[theorem]{Remark}
\newtheorem*{rep@theorem}{\rep@title}
\newcommand{\newreptheorem}[2]{%
\newenvironment{rep#1}[1]{%
 \def\rep@title{#2 \ref{##1}}%
 \begin{rep@theorem}}%
 {\end{rep@theorem}}}
\begin{document}
\title{\textsf{Tutte polynomials of hyperplane arrangements \\ and the finite field method.}}
\author{
\textsf{Federico Ardila\footnote{\noindent \textsf{San Francisco State University; Mathematical Sciences Research Institute; U. de Los Andes; federico@sfsu.edu.}
\newline 
The author was supported by NSF CAREER Award DMS-0956178, NSF Combinatorics Award DMS-1600609, and NSF Award DMS-1440140 to the  Mathematical Sciences Research Institute in Berkeley, California.}}}
\date{}
\maketitle

\begin{abstract} 
The Tutte polynomial is a fundamental invariant associated to a graph, matroid, vector arrangement, or hyperplane arrangement.
This short survey focuses on some of the most important results on Tutte polynomials of hyperplane arrangements.
We show that many enumerative, algebraic, geometric, and topological invariants of a hyperplane arrangement can be expressed in terms of its Tutte polynomial. 
We also show that, even if one is only interested in computing the Tutte polynomial of a graph or a matroid, the theory of hyperplane arrangements provides a powerful finite field method for this computation.
%This brief survey discusses the Tutte polynomial of a hyperplane arrangement, which captures important enumerative, algebraic, and topological information about the arrangement. It also describes the finite field method, a useful tool to compute Tutte polynomials of many graphs, matroids, and arrangements.
\end{abstract}

\addtocontents{toc}{\protect\setcounter{tocdepth}{2}}
\tableofcontents
%\setcounter{tocdepth}{2}

%%%%%%%%%%%%%%%%%%%%%%%%%%%%%%%%%%%%%%

%
%\section{{Synopsis}}\label{f.sec:synopsis}
%
%This chapter discusses the Tutte polynomial of a hyperplane arrangement, which captures important enumerative, algebraic, and topological information about the arrangement. It also describes the finite field method, a useful tool to compute Tutte polynomials of many graphs, matroids, and arrangements.
%
%\begin{itemize}
%
%\item
%Hyperplane arrangements and their complements
%
%\item
%The intersection poset of an arrangement
%
%\item
%The matroid of a central arrangement
%
%\item
%Constructions: deletion, contraction, centralization, essentialization
%
%\item
%The characteristic polynomial and the Tutte polynomial
%
%\item
%Tutte-Grothendieck invariants
%
%\item
%Complements in $\mathbb{R}, \mathbb{C}, \mathbb{F}_q$: regions, Poincar\'e polynomial, number of points.  
%
%\item
%Topological and algebraic invariants of arrangements
%
%\item
%The finite field method
%
%\item
%Formulas for characteristic and Tutte polynomials of specific families
%
%\item
%Multivariate Tutte polynomials
% 
%\item
%Arithmetic Tutte polynomials
%
%\item
%Zonotopes
%
%\item
%Toric arrangements
%
%\end{itemize}

\bigskip
\bigskip

\noindent \textsf{Note.} This is a preliminary version of a chapter of the upcoming \emph{CRC Handbook on the Tutte Polynomial and Related Topics}, edited by Joanna A.  Ellis-Monaghan and Iain Moffatt.

\newpage

\section{{Introduction}}\label{f.sec:intro}

The Tutte polynomial is a fundamental invariant associated to a graph, matroid, vector arrangement, or hyperplane arrangement. This short survey focuses on some of the most important results on Tutte polynomials of hyperplane arrangements. We show that many enumerative, algebraic, geometric, and topological invariants of a hyperplane arrangement can be expressed in terms of its Tutte polynomial. We also show that, even if one is only interested in computing the Tutte polynomial of a graph or a matroid, the theory of hyperplane arrangements provides a powerful finite field method for this computation.

%
%We begin by discussing the basic definitions on hyperplane arrangements and their characteristic and Tutte polynomials in Sections  \ref{f.sec:hyparrs} and \ref{f.sec:invariants}, respectively.
%Section \ref{f.sec:applications} discusses numerous applications of Tutte polynomials of hyperplane arrangements in combinatorics, algebra, and geometry. Section \ref{f.sec:finitefield} discusses arrangements over finite fields, and the finite field method for computing Tutte polynomials. In Section \ref{f.sec:computing} we collect the most interesting arrangements whose characteristic and Tutte polynomials are known. Section \ref{f.sec:variants} is devoted to two variants: the multivariate and arithmetic Tutte polynomials.

Our presentation is influenced by a 2002 graduate course on Hyperplane Arrangements by Richard Stanley at MIT, much of which became  the survey \cite{f.Stanleyhyparr}. See \cite{f.OrlikTerao} for a thorough introduction to more algebraic and topological aspects of the theory of hyperplane arrangements.

\section{{Hyperplane Arrangements}}\label{f.sec:hyparrs}

%We now discuss arrangements of hyperplanes in a vector space. The questions that we ask depend on whether the underlying field is $\mathbb{R}, \mathbb{C}$, or a finite field $\mathbb{F}_q$; but in every case, the underlying combinatorics plays an important role. 

Let ${\mathbbm{k}}$ be a field and $V = {\mathbbm{k}}^d$ be a vector space over ${\mathbbm{k}}$. Let $V^*$ be the dual vector space, which consists of the linear maps or \emph{functionals} from $V$ to $\mathbbm{k}$. 

\begin{definition}A \emph{hyperplane arrangement} ${\mathcal{A}}$ %=\{H_1, \ldots, H_n\}$ 
\index{hyperplane arrangement}
is a collection of affine hyperplanes in $V$. For each hyperplane $H \in \mathcal{A}$, 
let $l_H \in V^*$ be a nonzero linear functionals and $b_H \in {\mathbbm{k}}$ be a scalar such that 
\[
H = \{ x \in V \, : \, l_H(x) = b_H\}.
\]
\end{definition}

We say ${\mathcal{A}}$ is \emph{central} \index{hyperplane arrangement!central} if all hyperplanes have a common point; in the most natural examples, the origin is a common point. We say it is \emph{essential} \index{hyperplane arrangement!essential} if the intersection of all hyperplanes is a point.
Figure \ref{f.fig:arr}(a)  shows an essential arrangement of 4 hyperplanes in ${\mathbb{R}}^3$.

An important object of study in the theory of hyperplane arrangements is the \emph{complement}  \index{hyperplane arrangement!complement}
$
%V({\mathcal{A}}) = V \, \backslash \, \left(\bigcup_{i=1}^n H_i\right).
V({\mathcal{A}}) = V \, \backslash \, \left(\bigcup_{H \in \mathcal{A}} H \right).
$
%The kinds of questions that one asks about $V({\mathcal{A}})$ depend on the ground field $\mathbbm{k}$.

\begin{figure}[ht]
 \begin{center}
  \includegraphics[scale=.5]{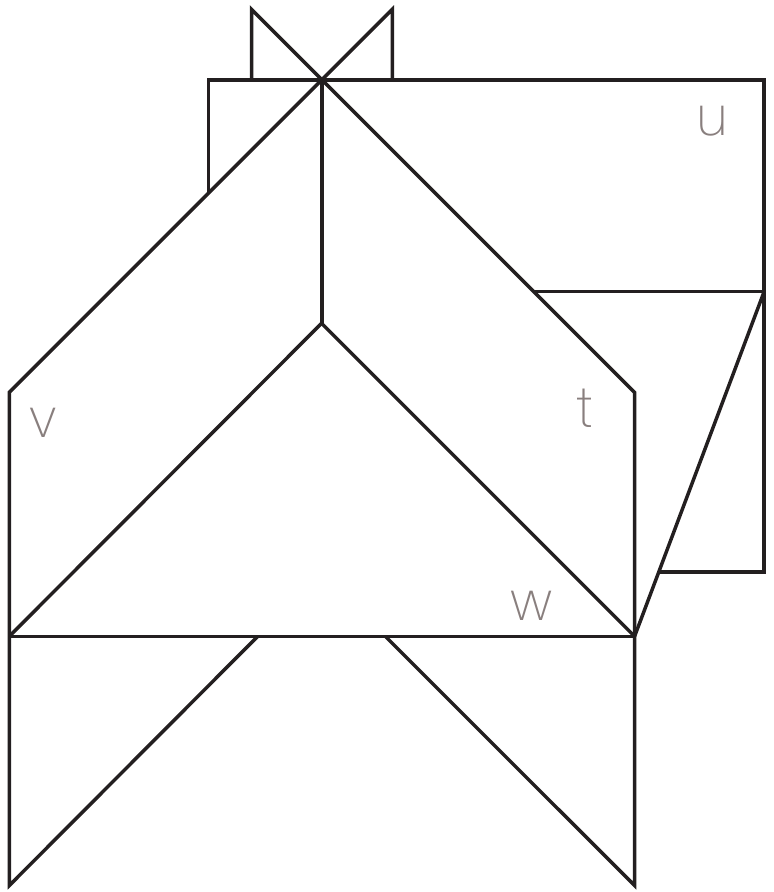}  \qquad \qquad  \qquad \includegraphics[scale=1]{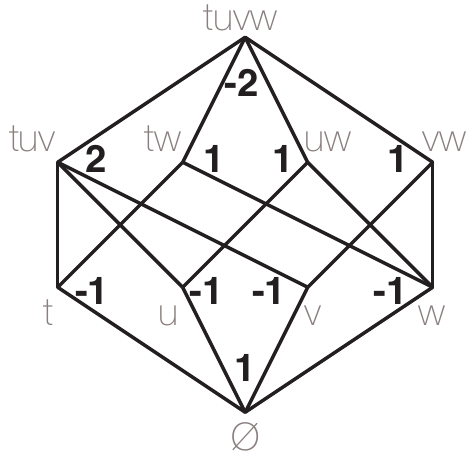}
  \caption{ \label{f.fig:arr}
(a) A hyperplane arrangement $\mathcal{A}$. (b) Its intersection poset $L(\mathcal{A})$ and M\"obius function. Each flat of $\mathcal{A}$ is labeled by the list of hyperplanes containing it.
}
  \end{center}
\end{figure}

\subsection{Intersection posets and matroids}\label{s:matroids}

Define a \emph{flat}  \index{hyperplane arrangement!flat} of ${\mathcal{A}}$ to be an affine subspace obtained as an intersection of hyperplanes in ${\mathcal{A}}$. We often identify a flat $F$ with the set of hyperplanes $\{H_1, \ldots, H_k\}$ of ${\mathcal{A}}$ containing it; clearly we have $F = H_1 \cap \cdots \cap H_k$. 
\begin{definition}
The \emph{intersection poset}  \index{hyperplane arrangement!intersection poset}$L_{\mathcal{A}}$ is the set of flats partially ordered by reverse inclusion of the flats (or inclusion of the sets of hyperplanes containing them). This is a ranked poset, with $r(F) = \dim V - \dim F.$
\end{definition}

If ${\mathcal{A}}$ is central, then $L_{\mathcal{A}}$ is a \emph{geometric lattice} \index{geometric lattice}\cite{f.CrapoRota, f.EC2}. If ${\mathcal{A}}$ is not central, then $L_{\mathcal{A}}$ is only a \emph{geometric meet semilattice} \index{geometric meet semilattice}\cite{f.WachsWalker}. The \emph{rank} $r=r({\mathcal{A}})$ of ${\mathcal{A}}$ is the height of $L_{\mathcal{A}}$.
Figure \ref{f.fig:arr} shows an arrangement and its intersection poset.

Every central hyperplane arrangement has an associated matroid.

\begin{definition}
Let $\mathcal{A}$ be a hyperplane arrangement  in a vector space $V$.
If $\mathcal{A}$ is central, the \emph{matroid}  \index{hyperplane arrangement!matroid} $M(\mathcal{A})$  of $\mathcal{A}$ 
 is the matroid on the ground set $\mathcal{A}$ given by the rank function
\[
r(\mathcal{B}) = \dim V - \dim \bigcap \mathcal{B} \qquad \textrm{ for } \mathcal{B} \subseteq \mathcal{A}.
\]
In general, the \emph{semimatroid} \index{hyperplane arrangement!semimatroid} of $\mathcal{A}$  is the collection of central subsets together with their ranks.
\end{definition}

%There are analogous combinatorial objects for non-central arrangements; two equivalent constructions 
Semimatroids are equivalent to the \emph{pointed matroids} \index{matroid!pointed} of \cite{f.Brylawski}; see  % and  \emph{semimatroids} \index{semimatroid} 
\cite{f.Ardilasemimatroids}.

\subsection{Deletion, contraction, centralization, essentialization}

A common technique for inductive arguments is to choose a hyperplane $H$ in an arrangement $\mathcal{A}$  and study how ${\mathcal{A}}$ behaves without $H$ (in the deletion ${\mathcal{A}} \backslash H$) and how $H$ interacts with the rest of ${\mathcal{A}}$ (in the contraction ${\mathcal{A}} / H$).

\begin{figure}[ht]
 \begin{center}
  \includegraphics[scale=.5]{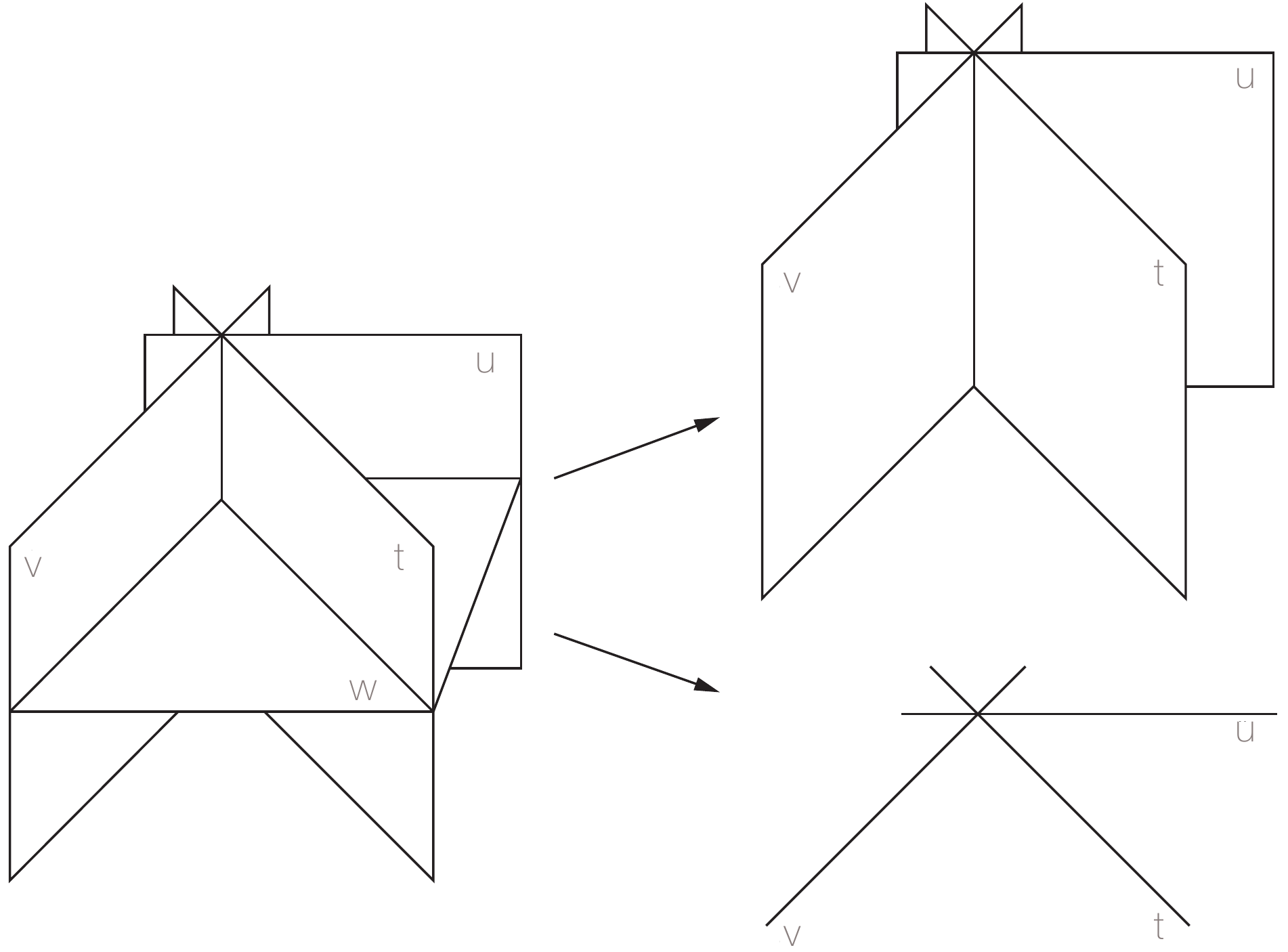}
  \caption{ \label{f.fig:deletioncontraction}
  An arrangement ${\mathcal{A}}$ and its deletion ${\mathcal{A}} \backslash w$ (top) and contraction ${\mathcal{A}}/w$ (bottom).}
 \end{center}
\end{figure}

\begin{definition}
For a hyperplane $H$ of an 
Let ${\mathcal{A}}$ be an arrangement in $V$ and let $H$ be a hyperplane in $\mathcal{A}$.
\begin{enumerate}
\item
The  \emph{deletion}  \index{hyperplane arrangement!deletion}
 %and \emph{contraction} of $H$ to be
%\begin{eqnarray*}
\[
{\mathcal{A}} \backslash H = \{A \in {\mathcal{A}} \, : \, A \neq H\} \\
\]
%{\mathcal{A}} / H &=& \{A \cap H \, : \, A \in {\mathcal{A}}, A \neq H\}
%\end{eqnarray*}
is the arrangement in the same ambient space $V$ consisting of the hyperplanes other than $H$.
\item
The \emph{contraction}  \index{hyperplane arrangement!contraction}
\[
{\mathcal{A}} / H = \{A \cap H \, : \, A \in {\mathcal{A}}, A \neq H\}
\]
is the arrangement in the new ambient space $H$ consisting of the intersections of the other hyperplanes with $H$.
\end{enumerate}
\end{definition}

 Figure \ref{f.fig:deletioncontraction} shows an arrangement ${\mathcal{A}} = \{t,u,v,w\}$ in ${\mathbb{R}}^3$ together with the deletion ${\mathcal{A}} \backslash w$ and contraction ${\mathcal{A}}/w$.

\begin{remark}\label{f.remark}
It is somewhat inconvenient that hyperplane arrangements are not closed under contraction. For example, in Figure \ref{f.fig:deletioncontraction}, the image of
$t$ in $({\mathcal{A}}/u)/v$
%the image of 3 in $({\mathcal{A}}/1)/2$ 
is not a hyperplane, but the whole ambient space. We will circumvent this difficulty by considering arrangements where the full-dimensional ambient space is allowed as a degenerate ``hyperplane". However, when we make statements about the complement $V(\mathcal{A})$, we will assume that $\mathcal{A}$ does not contain the degenerate hyperplane.

A more robust solution is to work more generally in the context of  \emph{matroids} \cite{f.CrapoRota, f.Oxley} for central arrangements, and  \emph{pointed matroids} \cite{f.Brylawski} or \emph{semimatroids} \cite{f.Ardilasemimatroids} for affine arrangements. However, to keep the presentation short and self-contained, we will not pursue this point of view.
\end{remark}

We say a hyperplane $H$ of an arrangement $\mathcal{A}$ in a vector space $V$ is a \emph{loop} \index{hyperplane arrangement!loop} if it is the degenerate hyperplane $H=V$. We say it is a \emph{coloop} \index{hyperplane arrangement!coloop} if it intersects the rest of the arrangement transversally; that is, if $r({\mathcal{A}}) = r({\mathcal{A}} \backslash H) + 1$. For example, $w$ is a coloop in the arrangement of Figure \ref{f.fig:deletioncontraction}.

In some ways central arrangements are slightly better behaved than affine arrangements. We can \emph{centralize} an affine arrangement ${\mathcal{A}}$ as follows.

\begin{definition} The \emph{centralization} \index{hyperplane arrangement!centralization} or \emph{cone} \index{hyperplane arrangement!cone} of a hyperplane arrangement $\mathcal{A}$ in $\mathbbm{k}^d$ is the arrangement $c{\mathcal{A}}$ in ${\mathbbm{k}}^{d+1}$ obtained by converting each hyperplane $a_1x_1 + \cdots + a_dx_d = a$ in ${\mathbbm{k}}^d$ into the hyperplane $a_1x_1 + \cdots + a_dx_d = ax_{d+1}$ in ${\mathbbm{k}}^{d+1}$, and adding the new hyperplane $x_{d+1}=0$.
\end{definition}

Sometimes arrangements are ``too central", in the sense that their intersection is a subspace $L \subset V$ of positive dimension. In that case, there is little harm in modding out the arrangement by $L$, as follows.
%intersecting our arrangement with the orthogonal complement $L^\perp$. 

\begin{definition}
The \emph{essentialization} \index{hyperplane arrangement!essentialization} of  a central arrangement ${\mathcal{A}}$ in $V$ is the arrangement 
ess$({\mathcal{A}}) = \{H / L  \, : \, H \in {\mathcal{A}}\}$ in the quotient vector space $V/L$. 
\end{definition}

%ess$({\mathcal{A}}) = \{H \cap L^\perp  \, : \, H \in {\mathcal{A}}\}$ in $L^\perp$. 
The resulting arrangement  is essential. In most situations of interest, there is no important difference between ${\mathcal{A}}$ and ess$({\mathcal{A}})$.

\section{{Polynomial Invariants}}\label{f.sec:invariants}

Different choices of the ground field $\mathbbm{k}$  lead to different questions about arrangements ${\mathcal{A}}$ and their complements $V({\mathcal{A}})$. In many of these questions, a crucial role is played by two combinatorial polynomials which we now define.

\subsection{{The characteristic and Tutte polynomials}} \label{f.sec:charpoly}

%We now define the two invariants that appear most often in enumerative, algebraic, geometric, and topological questions related to hyperplane arrangements.

\begin{definition}
The \emph{M\"obius function} \index{hyperplane arrangement!M\"obius function} $\mu: L(\mathcal{A}) \rightarrow \mathbb{Z}$ of (the intersection poset of) an arrangement $\mathcal{A}$ is defined recursively by decreeing that for every flat $G \in L(\mathcal{A})$,
\begin{equation}\label{f.e:Mobius1}
\sum_{F \leq G} \mu(F) = \begin{cases}
1 & \textrm{ if } G \textrm{ is the minimum element of } L(\mathcal{A}), \\
0 & \textrm{ otherwise}.
\end{cases}
\end{equation}
The \emph{characteristic polynomial} \index{hyperplane arrangement!characteristic polynomial} of ${\mathcal{A}}$ is 
\[
\chi({\mathcal{A}};q) = \sum_{F \in L_{\mathcal{A}}} \mu(F) q^{\dim F}.
\]
\end{definition}

\begin{example}
For the arrangement of Figure \ref{f.fig:arr}, the M\"obius function is shown in dark labels next to the intersection poset. The coefficients of the characteristic polynomial $\chi({\mathcal{A}};q) = q^3 - 4q^2 + 5q - 2$ is easily computed by adding the M\"obius numbers on each level of $L_{\mathcal{A}}$.
\end{example}

%The reader may wish to compute some examples, such as the characteristic polynomial of an arrangement of lines in ${\mathbb{R}}^2$, or the arrangement of $n$ coordinate hyperplanes in ${\mathbbm{k}}^n$. 
%We will see some general techniques to compute characteristic polynomials in Section \ref{f.sec:computechar}.

%\subsection{{{The Tutte polynomial}}}\label{f.sec:Tutte}

%
%
%Throughout this section, let $R$ be an arbitrary commutative ring. A function $f:\textrm{Matroids} \rightarrow R$ is a \emph{matroid invariant} if $f(M) = f(N)$ whenever $M \cong N$.  Let $L$ and $C$ be the matroids consisting of a single loop and a single coloop, respectively. 

%\subsubsection{{{Explicit definition}}}

\begin{definition}
The \emph{Tutte polynomial} \index{hyperplane arrangement!Tutte polynomial} of an arrangement $\mathcal{A}$ in a vector space $V$ is
\begin{equation}\label{f.th:Tutteformula}
T({\mathcal{A}};x,y) = \sum_{{\mathcal{B}} \subseteq {\mathcal{A}} \atop {\mathcal{B} \textrm{ central}}} (x-1)^{r-r({\mathcal{B}})} \, (y-1)^{|{\mathcal{B}}|-r({\mathcal{B}})},
%T({\mathcal{A}};x,y) = \sum_{\stackrel{{\mathcal{B}} \subseteq {\mathcal{A}}}{{\mathcal{B}} \textrm{ central}}} (x-1)^{r-r({\mathcal{B}})} \, (y-1)^{|{\mathcal{B}}|-r({\mathcal{B}})},
\end{equation}
where the sum is taken over all the central subarrangements ${\mathcal{B}}$ of ${\mathcal{A}}$,
and we write $r({\mathcal{B}}) = \dim V - \dim \bigcap {\mathcal{B}}$ and $r=r({\mathcal{A}})$. 
\end{definition}

The Tutte polynomial was defined for graphs, matroids, and arrangements in \cite{f.Tuttecontribution}, \cite{f.Crapo}, and \cite{f.ArdilaTutte} respectively. When $\mathcal{A}$ is central, the above definition coincides with the usual matroid-theoretic definition.

\begin{example}\label{f.ex:Tutte}
For the arrangement of Figure \ref{f.fig:arr}, (\ref{f.th:Tutteformula}) yields
\[
T(\mathcal{A};x,y) = (x-1)^3+4(x-1)^2+6(x-1)+3+(x-1)(y-1)+(y-1) = x^3+x^2+xy.
\]
\end{example}

The large amount of cancellation in the computation above is systematically explained by the following theorem. Let us fix a linear order on $\mathcal{A}$, and let $\mathcal{C}_{>H} = \{C \in \mathcal{C} \, : \, C > H\}$  for any subarrangement $\mathcal{C} \subseteq \mathcal{A}$ and hyperplane $H \in \mathcal{A}$.  We define a \emph{basis} of $\mathcal{A}$ to be a central subset of maximal rank $r$. 

Let $\mathcal{B}$ be a basis. Say a hyperplane $H \notin \mathcal{B}$ is \emph{externally active} \index{hyperplane arrangement!external activity} with respect to $\mathcal{B}$ if $\mathcal{B} \cup H$ is central and $r(\mathcal{B}_{>H} \cup H) = r(\mathcal{B}_{>H})$.
Say $H \in \mathcal{B}$ is \emph{internally active} \index{hyperplane arrangement!internal activity} if $r((\mathcal{B}-H) \cup \mathcal{A}_{<H}) = r(\mathcal{B}-H) = r-1$. Let $e(\mathcal{B})$ and $i(\mathcal{B})$ be the number of externally and internally active elements with respect to $\mathcal{B}$, respectively.

\begin{theorem} \cite{f.Ardilasemimatroids}
For any linear order on the hyperplanes of an arrangement $\mathcal{A}$, the Tutte polynomial of  $\mathcal{A}$ is given by 
$T(\mathcal{A};x,y) = \displaystyle  \sum_{\mathcal{B} \textrm{ basis}} x^{i(\mathcal{B})}y^{e(\mathcal{B})}.
$
%The Tutte polynomial of a hyperplane arrangement $\mathcal{A}$ is given by 
%$T(\mathcal{A};x,y) = \displaystyle  \sum_{\mathcal{B} \textrm{ basis}} x^{i(\mathcal{B})}y^{e(\mathcal{B})}.
%$
\end{theorem}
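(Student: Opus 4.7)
The plan is to prove the formula by bijectively reorganizing the corank-nullity sum from Equation~(\ref{f.th:Tutteformula}). Concretely, I want to show that the collection of central subsets $\mathcal{C} \subseteq \mathcal{A}$ decomposes as a disjoint union, indexed by bases $\mathcal{B}$ of $\mathcal{A}$, of Boolean intervals
\[
[\mathcal{B} \setminus I(\mathcal{B}), \, \mathcal{B} \cup E(\mathcal{B})] = \bigl\{ (\mathcal{B} \setminus X) \cup Y \;:\; X \subseteq I(\mathcal{B}),\; Y \subseteq E(\mathcal{B}) \bigr\},
\]
where $I(\mathcal{B})$ and $E(\mathcal{B})$ denote the sets of internally and externally active elements with respect to~$\mathcal{B}$.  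Moreover, for every element $\mathcal{C} = (\mathcal{B} \setminus X) \cup Y$ of such an interval, I want $r(\mathcal{C}) = r - |X|$ and $|\mathcal{C}| - r(\mathcal{C}) = |Y|$, so the contribution of $\mathcal{C}$ to the corank-nullity sum is precisely $(x-1)^{|X|}(y-1)^{|Y|}$.

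To build this decomposition, I would associate to each central subset $\mathcal{C}$ a canonical basis $\mathcal{B}(\mathcal{C})$ by a greedy procedure on the fixed linear order: start from a maximum-rank central subset of $\mathcal{C}$ taken greedily from the bottom of the order, and then augment it greedily, from the bottom, by externally active elements allowed by the centrality condition.  This greedy construction has two key properties: first, every element of $\mathcal{C}$ that lies outside $\mathcal{B}(\mathcal{C})$ is externally active with respect to $\mathcal{B}(\mathcal{C})$; second, every element of $\mathcal{B}(\mathcal{C})$ missing from~$\mathcal{C}$ is internally active with respect to~$\mathcal{B}(\mathcal{C})$.  These two facts together say exactly that $\mathcal{C}$ lies in the interval determined by $\mathcal{B}(\mathcal{C})$, and conversely the greedy procedure recovers $\mathcal{B}$ from any $\mathcal{C}$ in that interval, which gives the desired bijection.

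Granting the decomposition, the computation is immediate: using the binomial identity $\sum_{X \subseteq S}(x-1)^{|X|} = x^{|S|}$, one has
\[
T(\mathcal{A};x,y) \;=\; \sum_{\mathcal{B} \text{ basis}} \sum_{X \subseteq I(\mathcal{B})} \sum_{Y \subseteq E(\mathcal{B})} (x-1)^{|X|}(y-1)^{|Y|} \;=\; \sum_{\mathcal{B} \text{ basis}} x^{i(\mathcal{B})} y^{e(\mathcal{B})},
\]
as claimed.

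The main obstacle is the interval decomposition itself in the semimatroid setting, since the sum runs only over \emph{central} subsets.  One must verify that $(\mathcal{B} \setminus X) \cup Y$ is central for every $X \subseteq I(\mathcal{B})$ and $Y \subseteq E(\mathcal{B})$; the $X$-side is trivial, but the $Y$-side requires a centrality-preserving exchange-type property, namely that externally active elements can be freely added to $\mathcal{B}$ in any combination without destroying centrality.  This is the semimatroid analog of the matroid closure axioms and is exactly where one leverages the axiomatics of \cite{f.Ardilasemimatroids}; once this is in place, checking that $\mathcal{C} \mapsto \mathcal{B}(\mathcal{C})$ is well-defined, inverse to the interval map, and rank-correct is a step-by-step verification against the definitions of internal and external activity.
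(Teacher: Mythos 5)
Your overall strategy — Crapo's partition theorem, organizing the corank-nullity sum over central subsets into Boolean intervals $[\mathcal{B}\setminus I(\mathcal{B}),\, \mathcal{B}\cup E(\mathcal{B})]$ indexed by bases, then telescoping with $\sum_{X\subseteq S}(x-1)^{|X|}=x^{|S|}$ — is the correct classical framework; the paper gives no proof and simply cites \cite{f.Ardilasemimatroids}, so the comparison here is with that standard argument. The first concrete problem is your greedy rule, which is garbled as written: you describe building $\mathcal{B}(\mathcal{C})$ by taking a greedy maximal-rank central subset of $\mathcal{C}$ and then ``augment[ing] it greedily, from the bottom, by externally active elements.'' Externally active elements are by definition \emph{not} in the basis; what one augments with to reach rank $r$ are hyperplanes \emph{outside} $\mathcal{C}$, and those should turn out to be the internally active ones. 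The standard rule processes hyperplanes from largest to smallest, admitting $H$ into $\mathcal{B}$ according to whether $H$ is spanned by the larger elements already admitted together with the part of $\mathcal{C}$ below $H$; making this precise is exactly where the disjointness and exhaustiveness of the intervals get proved, and your sketch treats that as a formality.

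The second problem is a misdiagnosis of where the work is. You flag centrality of $(\mathcal{B}\setminus X)\cup Y$ as the main obstacle requiring an exchange-type axiom, but this is the easy part: if $y\in E(\mathcal{B})$ then by definition $\cap\mathcal{B}_{>y}\subseteq y$, hence $\cap\mathcal{B}\subseteq y$; since $\mathcal{B}$ is a basis it is central, so $\cap(\mathcal{B}\cup Y)=\cap\mathcal{B}\neq\emptyset$, and every subset of a central set is central. No exchange axiom is needed. What actually requires an argument, and what your proposal silently asserts, is the rank identity $r\bigl((\mathcal{B}\setminus X)\cup Y\bigr)=r-|X|$ when both $X$ and $Y$ are nonempty: one must show each $y\in Y$ contains the flat $\cap(\mathcal{B}\setminus X)$, and this does \emph{not} follow from $y\supseteq\cap\mathcal{B}_{>y}$ once internally active elements $>y$ have been removed from $\mathcal{B}_{>y}$. (It is true — for instance, expand $l_y$ in the basis of functionals $\{l_H : H\in\mathcal{B}\}$ and use internal activity of each $x\in X$ with $y<x$ to see the coefficient of $l_x$ vanishes — but it is a real step.) Both gaps are fillable, but as written the proposal dwells on a non-issue while glossing over the two places where the proof actually lives.
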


In Example \ref{f.ex:Tutte}, the three monomials of $T(\mathcal{A};x,y)$ correspond to the bases $tuw, tvw, uvw$ of $\mathcal{A}$. We invite the reader to choose a linear order for $\mathcal{A}=\{t,u,v,w\}$ and verify that these bases give the monomials $x^3, x^2$, and $xy$.

Although it is not obvious from its definition, the characteristic polynomial is a specialization of the Tutte polynomial.

\begin{theorem} (Whitney's Theorem) \cite{f.ArdilaTutte,f.Whitney}
\label{f.th:charpoly}
The characteristic polynomial and the Tutte polynomial of an arrangement of rank $r$ in $\mathbbm{k}^d$ are related by 
\[
\chi({\mathcal{A}};q) = (-1)^r q^{d-r} T({\mathcal{A}};1-q,0).
\]
\end{theorem}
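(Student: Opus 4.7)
The plan is to reduce the identity to Whitney's rank formula
\[
\chi(\mathcal{A};q) \;=\; \sum_{\mathcal{B}\subseteq\mathcal{A},\,\mathcal{B}\text{ central}}(-1)^{|\mathcal{B}|}\,q^{\dim\bigcap\mathcal{B}},
\]
and then verify that identity by M\"obius inversion. First I would substitute $x=1-q$ and $y=0$ directly into the definition of $T(\mathcal{A};x,y)$: since $(1-q-1)^{r-r(\mathcal{B})}=(-1)^{r-r(\mathcal{B})}q^{r-r(\mathcal{B})}$ and $(0-1)^{|\mathcal{B}|-r(\mathcal{B})}=(-1)^{|\mathcal{B}|-r(\mathcal{B})}$, the signs combine to $(-1)^{r+|\mathcal{B}|}$, so after multiplying by $(-1)^r q^{d-r}$ and using $d-r(\mathcal{B})=\dim\bigcap\mathcal{B}$, the target identity reduces exactly to the displayed formula above (with $\mathcal{B}=\emptyset$ contributing $q^d$).

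Next I would group the central subsets by their intersection flat. Defining $f(F):=\sum_{\mathcal{B}:\,\bigcap\mathcal{B}=F}(-1)^{|\mathcal{B}|}$, the right-hand side of Whitney's formula becomes $\sum_{F\in L_\mathcal{A}} f(F)\,q^{\dim F}$. Comparing with the definition $\chi(\mathcal{A};q)=\sum_F \mu(F)q^{\dim F}$, it suffices to prove $f(F)=\mu(F)$ for every flat $F$.

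To establish this, I would check that $f$ satisfies the defining recursion \eqref{f.e:Mobius1} for $\mu$ and then invoke uniqueness. Fix a flat $F$ and let $\mathcal{A}_F=\{H\in\mathcal{A}:H\supseteq F\}$. Any subset $\mathcal{B}\subseteq\mathcal{A}_F$ is automatically central (all its members share the points of $F$) with $\bigcap\mathcal{B}\supseteq F$, i.e., $\bigcap\mathcal{B}\leq F$ in the reverse-inclusion order on $L_\mathcal{A}$. Partitioning such $\mathcal{B}$ by their intersection flat $G$ yields
\[
\sum_{G\leq F} f(G) \;=\; \sum_{\mathcal{B}\subseteq\mathcal{A}_F}(-1)^{|\mathcal{B}|} \;=\; (1-1)^{|\mathcal{A}_F|},
\]
which is $1$ if $\mathcal{A}_F=\emptyset$ (equivalently, if $F$ is the minimum element $V$ of $L_\mathcal{A}$) and $0$ otherwise. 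This is precisely \eqref{f.e:Mobius1}, so M\"obius inversion forces $f=\mu$.

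The only real obstacle is bookkeeping with the reverse-inclusion convention on $L_\mathcal{A}$: one must remember that the minimum element is the ambient space $V$, which corresponds exactly to $\mathcal{A}_F=\emptyset$ so that the binomial sum $(1-1)^{|\mathcal{A}_F|}$ produces the correct indicator. Everything else --- the sign bookkeeping in the substitution and the grouping by intersection flat --- is routine.
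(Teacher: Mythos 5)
Your proof is correct. The paper states this theorem without an in-text proof (it cites Whitney and Ardila), so there is no argument in the paper itself to compare against; but your route --- substitute $x=1-q$, $y=0$ into the central-subsets definition of $T$ to recover Whitney's rank formula $\chi(\mathcal{A};q)=\sum_{\mathcal{B}\text{ central}}(-1)^{|\mathcal{B}|}q^{\dim\bigcap\mathcal{B}}$, group central subsets by their intersection flat, and verify that $f(F)=\sum_{\bigcap\mathcal{B}=F}(-1)^{|\mathcal{B}|}$ satisfies the defining recursion \eqref{f.e:Mobius1} for $\mu$ via the binomial identity $\sum_{\mathcal{B}\subseteq\mathcal{A}_F}(-1)^{|\mathcal{B}|}=(1-1)^{|\mathcal{A}_F|}$ --- is the standard one, and every step checks out, including the sign bookkeeping, the identification of $\{\mathcal{B}:\bigcap\mathcal{B}\supseteq F\}$ with the power set of $\mathcal{A}_F$ (using that flats are nonempty so such $\mathcal{B}$ are automatically central), and the observation that $\mathcal{A}_F=\emptyset$ exactly when $F$ is the ambient space, the minimum of $L_\mathcal{A}$.
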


This is part of a general phenomenon that we explore in the next section.

\subsection{Tutte-Grothendieck invariants, recursion, universality}

As evidenced by this Handbook, the Tutte polynomial appears naturally in numerous different contexts, and provides the answer to many enumerative, algebraic,  topological, and geometric questions. This is certainly true in the context of  hyperplane arrangements; when we encounter a new quantity or polynomial associated to an arrangement, a good first question to ask is whether it is an evaluation of the Tutte polynomial.

The ubiquity of the Tutte polynomial is not accidental: this polynomial is universal among a large, important family of invariants of hyperplane arrangements, as we now make precise. Let $R$ be a ring, and let $\textrm{HypArr}$ be the collection of all hyperplane arrangements over a field ${{\mathbbm{k}}}$. As explained in Remark \ref{f.remark}, we need to allow our arrangements to contain the ambient space as a degenerate hyperplane.

\begin{definition}
A function $f:\textrm{HypArr} \rightarrow R$ is a \emph{generalized Tutte-Grothendieck invariant} \index{hyperplane arrangement!Tutte-Grothendieck invariant} if $f(\mathcal{A}_1) = f(\mathcal{A}_2)$ for any arrangements $\mathcal{A}_1$ and $\mathcal{A}_2$ with isomorphic semimatroids, and if 
for every arrangement $\mathcal{A}$ and every hyperplane $H \in \mathcal{A}$, we have
\begin{equation} \label{f.eq:T-G}
f({\mathcal{A}}) =
\begin{cases} 
a f({\mathcal{A}} \backslash H) +  b f({\mathcal{A}} / H)  & \textrm{ if $H$ is neither a loop nor a coloop} \\
f({\mathcal{A}} \backslash H) f(L)  &  \textrm{ if $H$ is a loop}\\
f({\mathcal{A}} / H) f(C)  & \textrm{ if $H$ is a coloop}
\end{cases}
\end{equation}
for some non-zero constants $a,b \in R$. Here $f(L)$ and $f(C)$ denote the (necessarily well-defined) function of a single loop $L$ and a single coloop $C$, respectively. We say $f({\mathcal{A}})$  is a \emph{Tutte-Grothendieck invariant} when $a=b=1$.
\end{definition}

\begin{theorem} \cite{f.ArdilaTutte, f.Ardilasemimatroids, f.Crapo, f.Tuttecontribution} \label{f.th:Tutterecursion} 
The Tutte polynomial is a universal Tutte-Grothendieck invariant for $\mathrm{HypArr}$, namely,
\begin{enumerate}
\item 
The Tutte polynomial $T({\mathcal{A}};x,y)$ satisfies (\ref{f.eq:T-G}) with $a=b=1$, $f(C)=x$, and $f(L)=y$.
%For every matroid $M$ and every element $e \in M$
%\[
%T_M(x,y) =
%\begin{cases} 
% T_{M \backslash e}(x,y) +   T_{M / e}(x,y)  & \textrm{ if $e$ is neither a loop nor a coloop}, \\
%y  \, T_{M \backslash e}(x,y)   &  \textrm{ if $e$ is a loop},\\
%x \, T_{M / e}(x,y)  & \textrm{ if $e$ is a coloop}.
%\end{cases}
%\]
\item Any generalized Tutte-Grothendieck invariant is a function of 
 %can be expressed in terms of 
 the Tutte polynomial. Explicitly, if $f$ satisfies (\ref{f.eq:T-G}), then
\[
f({\mathcal{A}}) = a^{n-r} \, b^r  \, T\left({\mathcal{A}};\frac{f(C)}{b}, \frac{f(L)}{a}\right).
\]
where $n$ is the number of elements and $r$ is the rank  of ${\mathcal{A}}$.\end{enumerate}
\end{theorem}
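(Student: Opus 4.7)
The plan is to prove the two parts in sequence: first, derive the deletion–contraction recursion directly from the sum formula (\ref{f.th:Tutteformula}); then bootstrap this recursion to universality by induction on $|\mathcal{A}|$.

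For part (1), fix $H \in \mathcal{A}$ and split the sum in (\ref{f.th:Tutteformula}) into the central subsets $\mathcal{B}$ with $H \notin \mathcal{B}$ and those with $H \in \mathcal{B}$. The first family is exactly the collection of central subsets of $\mathcal{A}\setminus H$; when $H$ is not a coloop we have $r(\mathcal{A}\setminus H) = r(\mathcal{A})$ and this contribution is $T(\mathcal{A}\setminus H;x,y)$, whereas when $H$ is a coloop the exponent of $x-1$ gains an extra $+1$ and the contribution becomes $(x-1)\,T(\mathcal{A}\setminus H;x,y)$. The second family is in bijection with the central subsets of $\mathcal{A}/H$ via $\mathcal{B} \mapsto \mathcal{B}/H$; when $H$ is not a loop both size and rank drop by one, giving $T(\mathcal{A}/H;x,y)$, while when $H$ is a loop the rank is preserved, giving $(y-1)\,T(\mathcal{A}\setminus H;x,y)$. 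Combining the cases produces the three recursions of (\ref{f.eq:T-G}) with $a=b=1$; when $H$ is a coloop one also observes that $\mathcal{A}/H$ and $\mathcal{A}\setminus H$ have isomorphic semimatroids, so these two reformulations are consistent. Evaluating (\ref{f.th:Tutteformula}) on a single loop and a single coloop yields $T(\{L\};x,y)=y$ and $T(\{C\};x,y)=x$, completing part (1).

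For part (2), I would induct on $n=|\mathcal{A}|$, with base case $\mathcal{A}=\emptyset$ where $T(\mathcal{A};x,y)=1$ and both sides of the claimed formula equal $1$. For the inductive step, choose any $H \in \mathcal{A}$. If $H$ is neither loop nor coloop, apply (\ref{f.eq:T-G}) for $f$ together with part (1) for $T$, invoke the inductive hypothesis on $\mathcal{A}\setminus H$ (rank $r$) and $\mathcal{A}/H$ (rank $r-1$), and collect the powers of $a$ and $b$. If $H$ is a loop, combine $f(\mathcal{A}) = f(\mathcal{A}\setminus H)\,f(L)$ with $T(\mathcal{A};x,y) = y\,T(\mathcal{A}\setminus H;x,y)$ and absorb the extra $y$ via the substitution $y \mapsto f(L)/a$; the coloop case is symmetric using $f(C)/b$ and the semimatroid identification $\mathcal{A}/H \cong \mathcal{A}\setminus H$.

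The main obstacle is the careful bookkeeping of how ranks and cardinalities change under deletion and contraction in each of the three cases, and in particular verifying that the exponents of $a$, $b$, $x-1$, and $y-1$ all combine to give exactly $a^{n-r}b^r\,T\bigl(\mathcal{A};f(C)/b,f(L)/a\bigr)$ without any drift. A secondary subtlety, flagged in Remark \ref{f.remark}, is that iterated contractions can yield a degenerate hyperplane equal to the ambient space, so the domain $\mathrm{HypArr}$ must include such arrangements for the recursion to close; this is precisely why the statement distinguishes the single-loop value $f(L)$ as an independent parameter. Semimatroid-invariance of $f$ is what guarantees, finally, that the inductive argument yields the same answer regardless of which hyperplane $H$ is chosen at each step.
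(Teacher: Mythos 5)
The paper itself does not prove Theorem \ref{f.th:Tutterecursion}; it states it and cites the primary sources. Your proposal reconstructs the standard argument from those sources, and it is essentially correct: split the defining sum (\ref{f.th:Tutteformula}) over central subsets according to whether $H$ belongs, match ranks and cardinalities in each of the loop/coloop/ordinary cases (using $r(\mathcal{A}\setminus H)=r$ exactly when $H$ is not a coloop, and $r(\mathcal{A}/H)=r-1$ exactly when $H$ is not a loop), and invoke the semimatroid isomorphism $\mathcal{A}/H \cong \mathcal{A}\setminus H$ for a coloop $H$ to collapse $(x-1)T(\mathcal{A}\setminus H)+T(\mathcal{A}/H)$ to $xT(\mathcal{A}/H)$. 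Your exponent bookkeeping $a^{n-r}b^r$ in the induction for part (2) checks out in all three cases, and you correctly flag that degenerate hyperplanes must be admitted so that contraction stays in $\mathrm{HypArr}$.

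Two small points worth tightening. First, in part (1) the phrase ``both size and rank drop by one'' should be understood as a statement relating the summand for $\mathcal{B}\ni H$ in $T(\mathcal{A})$ to the summand for $\mathcal{B}\setminus H$ in $T(\mathcal{A}/H)$; the bijection is really $\mathcal{B}\mapsto\mathcal{B}\setminus H$ under the labelled identification of $\mathcal{A}/H$ with $\mathcal{A}\setminus H$, which is precisely why the multiset/degenerate-hyperplane convention of Remark \ref{f.remark} matters. Second, in part (2) the base case tacitly assumes $f(\emptyset)=1$. This is not literally forced by (\ref{f.eq:T-G}) alone (e.g.\ if $f(C)$ and $f(L)$ were zero divisors), but it is the standard normalization implicit in the definition of a Tutte--Grothendieck invariant, and it is forced whenever $f(C)$ or $f(L)$ is regular in $R$; a one-line remark acknowledging this convention would make the induction airtight.
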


Part 1 of this theorem implies that the Tutte polynomial can also be defined alternatively by the recursion (\ref{f.eq:T-G}) with $a=b=1$, $f(C)=x$, and $f(L)=y$.
In Part 2 we do not need to assume that $a$ and $b$ are invertible; when we multiply by $a^{n-r} \, b^r$, all denominators cancel out.

\section{{{Topological and Algebraic Invariants }}} \label{f.sec:applications}

As is the case with graphs and matroids, many important invariants of a hyperplane arrangement are generalized Tutte-Grothendieck invariants, and hence are evaluations of the Tutte polynomial. In this section we collect, without proofs, a few selected results of this flavor.
%One can probably prove every statement in this section by proving that the quantities in question satisfy a deletion--contraction recursion; many of the results also have more interesting and enlightening explanations. 

\subsection{{{Topological invariants of arrangements}}}

\begin{theorem}\label{f.th:charpoly} The characteristic polynomial $\chi({\mathcal{A}};x)$ contains the following information about the complement $V({\mathcal{A}})$ of a hyperplane arrangement ${\mathcal{A}}$.

\begin{enumerate}
\item
 $({\mathbbm{k}} = {\mathbb{R}})$
\cite{f.Zaslavsky} %(Zaslavsky's Theorem)
Let ${\mathcal{A}}$ be a hyperplane arrangement in ${\mathbb{R}}^d$. Let ${\mathcal{A}}$ separate the complement $V(\mathcal{A})$ into $a(\mathcal{A})$ connected components or \emph{regions}\index{hyperplane arrangement!region}. Let $b(\mathcal{A})$ be the number of  bounded regions \index{hyperplane arrangement!region!bounded} of the essentialization $\mathrm{ess}(\mathcal{A})$. Then
%bounded after quotienting out any linear space that they contain. Then
%The number of regions and relatively bounded regions of the complement $V({\mathcal{A}})$ are, respectively:
\[
a({\mathcal{A}}) = (-1)^d\chi({\mathcal{A}};-1), \qquad b({\mathcal{A}}) = (-1)^{r({\mathcal{A}})}\chi({\mathcal{A}};1).
\]

\item
$({\mathbbm{k}} = {\mathbb{C}})$
\cite{f.GoreskyMacPherson, f.OrlikSolomon}
Let ${\mathcal{A}}$ be a hyperplane arrangement in ${\mathbb{C}}^d$. The integral cohomology ring of the complement $V({\mathcal{A}})$ has Poincar\'e polynomial
\[
\sum_{k \geq 0} \mathrm{rank } \, H^k(V({\mathcal{A}}), {\mathbb{Z}}) q^k = (-q)^d\chi\left({\mathcal{A}};\frac{-1}{q} \right).
\]

\item
$({\mathbbm{k}} = {\mathbb{F}}_q)$
\cite{f.Athanasiadis, f.CrapoRota}
Let ${\mathcal{A}}$ be a hyperplane arrangement in ${\mathbb{F}}_q^d$ where ${\mathbb{F}}_q$ is the finite field of $q$ elements. The complement $V({\mathcal{A}})$ has size
\[
| V({\mathcal{A}}) | = \chi({\mathcal{A}};q).
\]
\end{enumerate}
\end{theorem}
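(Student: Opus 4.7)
The plan is to prove the three parts in a unified fashion by showing that each invariant satisfies a deletion-contraction recursion compatible with the one obeyed by the characteristic polynomial. From Whitney's theorem together with Theorem \ref{f.th:Tutterecursion}, a short calculation comparing ranks gives
\[
\chi(\mathcal{A};q) = \chi(\mathcal{A}\backslash H;q) - \chi(\mathcal{A}/H;q)
\]
whenever $H$ is neither a loop nor a coloop, together with matching product rules when $H$ is a loop or coloop. So each part reduces to verifying the correct deletion-contraction recursion for the invariant, together with the right base cases (the empty arrangement, and a single hyperplane in low dimensions).

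For Part 1, I would argue geometrically over $\mathbb{R}$. Adding $H$ back to $\mathcal{A}\backslash H$ either leaves a region alone or bisects it, and the bisected regions are exactly those that meet $H$ in a full-dimensional open subset of $H$; these are in natural bijection with the regions of $\mathcal{A}/H$. This yields $a(\mathcal{A}) = a(\mathcal{A}\backslash H) + a(\mathcal{A}/H)$, which matches the recursion satisfied by $(-1)^d \chi(\mathcal{A};-1)$ once one keeps track of dimensions (since $\mathcal{A}/H$ lives in a space of dimension $d-1$). A similar but more delicate accounting, distinguishing bounded from unbounded regions of $\mathcal{A}\backslash H$ that $H$ may or may not cut, yields the analogous recursion $b(\mathcal{A}) = b(\mathcal{A}\backslash H) + b(\mathcal{A}/H)$, where $b(\mathcal{A}/H)$ is computed in the essentialization; this matches $(-1)^{r(\mathcal{A})}\chi(\mathcal{A};1)$.

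For Part 2, I would cite the Orlik-Solomon presentation of $H^*(V(\mathcal{A}),\mathbb{Z})$, generated by the classes $[d\log l_H]$, whose Poincar\'e polynomial is combinatorially $\sum_{F\in L_{\mathcal{A}}}|\mu(\hat 0,F)|\, q^{r(F)}$; a short manipulation shows this equals $(-q)^d\chi(\mathcal{A};-1/q)$. Alternatively, one can run induction using the Milnor fibration obtained by projecting $V(\mathcal{A})$ along a generic linear direction onto the complement of $\mathcal{A}\backslash H$, together with a Mayer-Vietoris argument to isolate the contribution from $V(\mathcal{A}/H)$, thereby producing the matching recursion for the Betti numbers.

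For Part 3, I would bypass recursion and prove the formula directly by M\"obius inversion on $L_{\mathcal{A}}$. For $x \in \mathbb{F}_q^d$, let $F(x)$ be the intersection of all hyperplanes containing $x$, with $F(x) = \mathbb{F}_q^d$ if $x$ lies on none. Setting $N(G) = |\{x : F(x) = G\}|$, partitioning $\mathbb{F}_q^d$ by $F(x)$ and using $|F| = q^{\dim F}$ gives
\[
q^{\dim F} = \sum_{G \geq F} N(G),
\]
and M\"obius inversion yields $N(F) = \sum_{G \geq F} \mu(F,G)\, q^{\dim G}$. Taking $F = \hat 0$, the minimum flat $\mathbb{F}_q^d$, produces $|V(\mathcal{A})| = N(\hat 0) = \chi(\mathcal{A};q)$.

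The main obstacle is Part 2: unlike Parts 1 and 3, which are elementary once the recursion or M\"obius inversion framework is set up, Part 2 requires genuine algebraic-topological input (the Orlik-Solomon theorem or a fibration argument identifying the cohomology ring) which is not purely combinatorial and for which I would rely on the cited references \cite{f.GoreskyMacPherson, f.OrlikSolomon}.
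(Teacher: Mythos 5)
The paper presents Theorem \ref{f.th:charpoly} with no proof at all: the section it appears in is explicitly introduced with ``we collect, without proofs, a few selected results of this flavor,'' and the theorem simply carries citations to Zaslavsky, Goresky--MacPherson/Orlik--Solomon, and Athanasiadis/Crapo--Rota. So there is no ``paper's own proof'' to compare against; I will assess your proposal on its own merits.

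Your Part 3 argument is correct and is essentially the standard Crapo--Rota/Athanasiadis argument (and it is the one the paper implicitly relies on later, since Theorem \ref{f.th:Tuttefinitefield} is just a $t$-weighted refinement of exactly this M\"obius-inversion count). The derivation of the deletion--contraction recursion for $\chi$ from Whitney's theorem and Theorem \ref{f.th:Tutterecursion} is also correct, including the careful bookkeeping of $d$, $r$, and the $(-1)^d$ sign; and your geometric recursion $a(\mathcal{A}) = a(\mathcal{A}\backslash H) + a(\mathcal{A}/H)$ for regions is the standard and correct proof of the first half of Zaslavsky's theorem.

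The one place where your sketch is thinner than it should be is the bounded-region recursion $b(\mathcal{A}) = b(\mathcal{A}\backslash H) + b(\mathcal{A}/H)$. The naive region-by-region accounting you gesture at --- matching regions cut by $H$ with regions of $\mathcal{A}/H$ and tracking which pieces are bounded --- does not work uniformly: when $H$ is a coloop the pointwise claim ``(bounded pieces of $R$ cut by $H$) $=$ (was $R$ bounded?) $+$ (is $R \cap H$ bounded?)'' genuinely fails (e.g.\ two parallel lines plus a transversal in $\mathbb{R}^2$), and even in the non-coloop case one must be careful with the essentialization and with unbounded regions of $\mathcal{A}\backslash H$ that $H$ slices into one bounded and one unbounded piece. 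The recursion is true when $H$ is not a coloop, but Zaslavsky's own proof and most textbook treatments establish the bounded-region formula by a different route (e.g.\ a reduced Euler characteristic argument on the bounded complex, or by running the region recursion on the cone $c\mathcal{A}$ and specializing), precisely because the direct bookkeeping is delicate. You should either spell that bookkeeping out fully, or switch to one of those cleaner arguments.

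For Part 2, your assessment is right: there is no purely combinatorial proof, and the Poincar\'e polynomial identity does reduce to the sign-alternation $\mu(\hat 0, F) = (-1)^{r(F)}|\mu(\hat 0, F)|$ once one has the Orlik--Solomon/Brieskorn description of $H^*(V(\mathcal{A}),\mathbb{Z})$; citing that input, as the paper does, is appropriate.
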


\begin{theorem} \cite{f.BrylawskiOxley}
Let ${\mathcal{A}}$ be a central arrangement in ${{\mathbbm{R}}}^d$. 

\begin{enumerate}
\item
 Consider an affine hyperplane $H$ which is in general position with respect to ${\mathcal{A}}$. Then the number of regions of ${\mathcal{A}}$ which  have a bounded (and non-empty) intersection with $H$ equals $T(\mathcal{A}; 1,0)$, the absolute value of the last coefficient of $\chi({\mathcal{A}};q)$. In particular, this number is independent of $H$.
\item
 Add to ${\mathcal{A}}$ an affine hyperplane $H'$ which is a parallel translation of one of the hyerplanes $H \in {\mathcal{A}}$. The number of bounded regions of ${\mathcal{A}} \cup H'$ is the \emph{beta invariant} \index{hyperplane arrangement!beta invariant} of ${\mathcal{A}}$, which is the coefficient of $x^1y^0$ and of $x^0y^1$ in $T({\mathcal{A}};x,y)$.
 In particular, this number is independent of $H$.
\end{enumerate}
\end{theorem}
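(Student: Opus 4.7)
The plan is to combine Zaslavsky's theorem and Whitney's theorem with a direct geometric analysis of how the auxiliary hyperplane slices $\mathcal{A}$. Throughout we may essentialize and assume $\mathcal{A}$ has rank $d$ in $\mathbb{R}^d$, since neither the Tutte polynomial nor the relevant region counts change.

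For Part~1, I would first establish a bijection between the regions $R$ of $\mathcal{A}$ with $R \cap H$ bounded and nonempty and the bounded regions of the sliced arrangement $\mathcal{A}|_H := \{A \cap H : A \in \mathcal{A}\}$ in $H$: each bounded region of $\mathcal{A}|_H$ is the trace of a unique region of $\mathcal{A}$, and boundedness transfers. By Zaslavsky's theorem applied in $H$, the right-hand count is $(-1)^{r(\mathcal{A}|_H)} \chi(\mathcal{A}|_H; 1)$. The general-position hypothesis makes $F \mapsto F \cap H$ a poset isomorphism $L(\mathcal{A}) \setminus \{\hat{1}\} \xrightarrow{\sim} L(\mathcal{A}|_H)$ with every dimension dropping by one (here $\hat{1} = \bigcap \mathcal{A}$), and comparing M\"obius sums yields
\[
q \, \chi(\mathcal{A}|_H; q) = \chi(\mathcal{A}; q) - \mu(\hat{1}).
\]
Evaluating at $q = 1$ and using the fact that $\chi(\mathcal{A}; 1) = 0$ for every central arrangement (a consequence of $L(\mathcal{A})$ having both $\hat{0}$ and $\hat{1}$) gives $\chi(\mathcal{A}|_H; 1) = -\mu(\hat{1})$. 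Whitney's theorem at $q = 0$ identifies $\mu(\hat{1}) = (-1)^r T(\mathcal{A}; 1, 0)$, and collecting signs yields $T(\mathcal{A}; 1, 0)$.

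For Part~2, centrality forces every region of $\mathcal{A}$ to be an unbounded polyhedral cone from the apex, so $H'$ creates a bounded region of $\mathcal{A} \cup H'$ exactly when $R \cap H'$ is bounded and nonempty (in which case $H'$ truncates $R$ into a bounded near-apex piece and an unbounded outer piece). Therefore $b(\mathcal{A} \cup H')$ equals the number of regions of $\mathcal{A}$ with bounded nonempty intersection with $H'$. Because $H' \cap H = \emptyset$, each such region of $\mathcal{A}$ pairs with its $H$-mirror to form a region of $\mathcal{A} \setminus H$ sharing the same $H'$-intersection, giving a bijection with the analogous count for $\mathcal{A} \setminus H$. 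Assuming $H$ is not a coloop of $\mathcal{A}$, the arrangement $\mathcal{A} \setminus H$ is again central and essential and $H'$ is in general position with respect to it, so Part~1 identifies this count as $T(\mathcal{A} \setminus H; 1, 0)$. To close, I would prove the Tutte-polynomial identity $T(\mathcal{A} \setminus H; 1, 0) = \beta(\mathcal{A})$ for central (hence simple) $\mathcal{A}$ and non-coloop $H$ by induction on $|\mathcal{A}|$: simplicity guarantees either a non-loop non-coloop element $F \in \mathcal{A} \setminus H$ (to which deletion-contraction on $T(\mathcal{A} \setminus H; x, y)$ and the beta recursion $\beta(\mathcal{A}) = \beta(\mathcal{A} \setminus F) + \beta(\mathcal{A}/F)$ apply via the inductive hypothesis), or the base case $\mathcal{A} \setminus H \cong U_{r,r}$, which forces $\mathcal{A} \cong U_{r, r+1}$ and makes both sides equal to $1$. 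The coloop case $\mathcal{A} = \mathcal{A}' \oplus \{H\}$ is handled separately: both $\beta(\mathcal{A}) = 0$ and $b(\mathcal{A} \cup H') = 0$, the latter because the line $\bigcap(\mathcal{A} \setminus H)$ provides a translation direction inside the strip between $H$ and $H'$ that prevents any region there from being bounded. Independence of $H'$ is then automatic since the answer is the intrinsic invariant $\beta(\mathcal{A})$; equality of the $x^1 y^0$ and $x^0 y^1$ coefficients is the classical self-duality of $\beta$.

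The main obstacle is the Tutte-polynomial identity $T(\mathcal{A} \setminus H; 1, 0) = \beta(\mathcal{A})$: it is not a one-line consequence of deletion-contraction, requires the inductive argument above, and needs a separate geometric treatment in the coloop case. The careful sign-tracking in the M\"obius/Zaslavsky computation in Part~1 and verifying that $H'$ truncates each $\mathcal{A}$-region into at most one bounded piece are the remaining places where bookkeeping matters.
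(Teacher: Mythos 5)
The paper states this result without proof --- Section 4 explicitly ``collects, without proofs, a few selected results'' --- so there is no in-paper argument to compare against, and I can only evaluate your proposal on its own. Your Part~1 is correct: the bijection between regions of $\mathcal{A}$ with bounded nonempty $H$-slice and bounded regions of the sliced arrangement $\mathcal{A}|_H$, the poset isomorphism $L(\mathcal{A})\setminus\{\hat 1\}\cong L(\mathcal{A}|_H)$ dropping every dimension by one, and the M\"obius/Zaslavsky/Whitney sign bookkeeping all check out.

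Part~2, however, has a genuine gap. The geometric reduction is fine: the bounded regions of $\mathcal{A}\cup H'$ do biject with the regions of $\mathcal{A}\setminus H$ whose $H'$-slice is bounded and nonempty. But you then invoke Part~1, asserting that $H'$ is in general position with respect to $\mathcal{A}\setminus H$. It essentially never is: $H'$ is parallel to $H$, so it misses every positive-dimensional flat of $\mathcal{A}\setminus H$ that happens to lie inside $H$, and such flats exist whenever some rank-$2$ flat of $\mathcal{A}$ contains $H$ together with at least two other hyperplanes (for instance, $\{x_i=x_j=x_k\}\subset\{x_i=x_j\}$ in any braid arrangement of rank $\geq 3$). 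Consequently the count you want is \emph{not} $T(\mathcal{A}\setminus H;1,0)$, and the target identity $T(\mathcal{A}\setminus H;1,0)=\beta(\mathcal{A})$ that you plan to establish by induction is simply false. Concretely, let $\mathcal{A}$ be the braid arrangement of $K_4$ and $H$ any of its hyperplanes; then $T(K_4\setminus e;x,y)=x^3+2x^2+x+2xy+y+y^2$, so $T(K_4\setminus e;1,0)=4$, whereas $\beta(K_4)=2$. Your deletion--contraction induction step is formally consistent, which is exactly why the error must sit in the lemma being induced on rather than in the recursion; indeed the discrepancy already appears one level down, where $(\mathcal{A}\setminus H)\setminus F$ is a $4$-cycle with $T(C_4;1,0)=3\neq 1=\beta(K_4\setminus F)$. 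A correct proof cannot route through Part~1's general-position machinery; the standard argument instead analyzes the intersection poset of the non-central arrangement $\mathcal{A}\cup H'$ directly (its flats split according to whether they lie in $H'$, which realizes $L(\mathcal{A}/H)$ as an upper interval) and applies Zaslavsky's bounded-regions formula to the resulting characteristic polynomial.
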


One very important algebraic topological invariant of a complex arrangement $\mathcal{A}$ is the cohomology ring $H^*(V(\mathcal{A}), {\mathbb{Z}})$ of its complement, known as the \emph{Orlik-Solomon algebra} \index{hyperplane arrangement!Orlik-Solomon algebra} of $\mathcal{A}$. %; see Chapter ??? (FalkKung) in this handbook. 
It has the following combinatorial presentation.

\begin{theorem} \cite{f.OrlikSolomon} Let $\mathcal{A}$ be a central arrangement in $\mathbb{C}^d$.
Let $E$ be the exterior algebra with generators $e_H$ for each $H \in \mathcal{A}$. For each ordered set of hyperplanes $S=\{H_1, \ldots, H_k\}$ let $e_S = e_{H_1} \wedge \cdots \wedge e_{H_k}$ and let $\partial e_S = \sum_{j=1}^k (-1)^{j-1} e_{S - H_j}$. Say that $S$ is \emph{dependent} if $\dim \cap S > d - |S|$, or equivalently, if $l_{H_1}, \ldots, l_{H_k}$ are linearly dependent. Then 
\[
H^*(V(\mathcal{A}), {\mathbb{Z}}) \cong E / \left\langle \, \partial e_S \, : \, S \subseteq \mathcal{ A} \textrm{ is dependent} \,  \right\rangle.
\]
%The Hilbert polynomial of this  \emph{Orlik-Solomon algebra} of $\mathcal{A}$ is given by Theorem \ref{f.th:charpoly}.2. 
\end{theorem}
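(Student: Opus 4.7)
My plan is to build an explicit ring map from $E$ to $H^*(V(\mathcal{A}),{\mathbb{Z}})$ that visibly kills the $\partial e_S$ relations, show it is surjective, and then compare ranks using the Poincar\'e polynomial formula of Theorem \ref{f.th:charpoly}(2).

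First, for each $H\in\mathcal{A}$ I attach the closed logarithmic one-form $\omega_H=\frac{1}{2\pi i}\,\frac{d\ell_H}{\ell_H}$, which has period $1$ around a small loop linking $H$ and so defines an integral class $[\omega_H]\in H^1(V(\mathcal{A}),{\mathbb{Z}})$. Sending $e_H\mapsto[\omega_H]$ extends, via anticommutativity of wedge of one-forms, to a ring homomorphism $\phi:E\to H^*(V(\mathcal{A}),{\mathbb{Z}})$. If $S=\{H_1,\ldots,H_k\}$ is dependent, then any nontrivial linear dependence $\sum c_j\ell_{H_j}=0$ yields, after a short computation with logarithmic derivatives, the vanishing of $\sum_{j=1}^k(-1)^{j-1}\,\omega_{H_1}\wedge\cdots\widehat{\omega_{H_j}}\cdots\wedge\omega_{H_k}$ as a form on $V(\mathcal{A})$. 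Hence $\phi(\partial e_S)=0$ and $\phi$ factors through $\overline{\phi}:\mathrm{OS}(\mathcal{A}) \to H^*(V(\mathcal{A}),{\mathbb{Z}})$, where $\mathrm{OS}(\mathcal{A}) = E/\langle\partial e_S : S\text{ dependent}\rangle$.

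The main obstacle is proving that $\phi$ is surjective; this is essentially Brieskorn's theorem. I would induct on $|\mathcal{A}|$ by deletion–restriction. Picking $H\in\mathcal{A}$, one views $V(\mathcal{A})$ as the complement of a smooth hypersurface inside $V(\mathcal{A}\setminus H)$, and the associated Gysin long exact sequence can be shown to split into short exact sequences
\[
0\to H^{*}(V(\mathcal{A}\setminus H),{\mathbb{Z}})\to H^{*}(V(\mathcal{A}),{\mathbb{Z}})\to H^{*-1}(V(\mathcal{A}/H),{\mathbb{Z}})\to 0,
\]
with connecting map the residue along $H$. The splitness is the hard technical point; it is usually deduced by checking that the rank count matches the recursion $\chi(\mathcal{A})=\chi(\mathcal{A}\setminus H)-\chi(\mathcal{A}/H)$ implied by Theorem \ref{f.th:Tutterecursion}. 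Since $[\omega_H]$ provides the generator of the quotient summand, the inductive hypothesis gives that the classes $[\omega_{H'}]$ generate $H^*(V(\mathcal{A}),{\mathbb{Z}})$ as a ring, so $\overline{\phi}$ is surjective.

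To finish, I would fix a linear order on $\mathcal{A}$ and use the relations $\partial e_S=0$ to rewrite any wedge monomial as a linear combination of \emph{no-broken-circuit} monomials; this exhibits a spanning set for $\mathrm{OS}(\mathcal{A})$ whose graded size equals $(-q)^d\chi(\mathcal{A};-1/q)$ by the classical NBC theorem for matroids. By Theorem \ref{f.th:charpoly}(2), this agrees with the Poincar\'e polynomial of $H^*(V(\mathcal{A}),{\mathbb{Z}})$. A graded surjection between finitely generated free abelian groups of equal rank in each degree is an isomorphism, so $\overline{\phi}$ is the desired isomorphism.
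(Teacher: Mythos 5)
The paper states this theorem without proof --- Section~4 is explicitly a collection of results ``without proofs," citing \cite{f.OrlikSolomon}. So there is no in-paper argument to compare against; your sketch is, correctly, an outline of the classical Orlik--Solomon/Brieskorn proof, and the overall strategy (map logarithmic forms, kill the relations, prove surjectivity by deletion--restriction, match graded ranks via NBC monomials) is the right one. A few places deserve more care than your phrasing suggests, though.

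First, the ``short computation with logarithmic derivatives'' that kills $\partial e_S$ works directly only when $S$ is a \emph{circuit} (a minimal dependent set), where the dependence $\sum c_j \ell_{H_j}=0$ has all $c_j\ne 0$. For a general dependent $S$ the linear relation may not involve all hyperplanes, and the identity fails as stated; one instead observes that the ideal generated by the $\partial e_C$ over circuits $C$ coincides with the ideal generated by $\partial e_S$ over all dependent $S$, so vanishing on circuits suffices. Second, dismissing the splitting of the Gysin sequence as something ``usually deduced by checking the rank count'' undersells the real content: Brieskorn's lemma is proved by a local-to-global induction, and a pure rank count is circular before one knows the cohomology is free and has the claimed dimensions. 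The standard way to close the loop is to run the deletion--restriction induction on the commutative ladder from the short exact sequence $0\to \mathrm{OS}(\mathcal{A}\setminus H)\to \mathrm{OS}(\mathcal{A})\to \mathrm{OS}(\mathcal{A}/H)\to 0$ into the Gysin triangle, where surjectivity of $\overline{\phi}$ on the two outer terms already forces the connecting map to vanish. Third, you should be explicit that your appeal to Theorem~\ref{f.th:charpoly}(2) is non-circular only if you take the Goresky--MacPherson (stratified Morse theory) proof of the Poincar\'e polynomial formula as the input, since the Orlik--Solomon proof of that same Poincar\'e formula is precisely the theorem you are establishing. Finally, your map lands a priori in de~Rham cohomology with complex coefficients; the claim that $[\omega_H]$ is an integral class (via the unit period around a meridian of $H$) is the right remark, but the passage to an isomorphism over $\mathbb{Z}$ still requires the observation, obtained along the way from the split Gysin sequences, that $H^*(V(\mathcal{A}),\mathbb{Z})$ is torsion-free.
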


Another important invariant of a complex arrangement $\mathcal{A}$ is the cohomology ring $H^*(W(\mathcal{A}), {\mathbb{Z}})$ of the  \emph{wonderful compactification} \index{hyperplane arrangement!wonderful compactification} $W(\mathcal{A})$ of the complement $V(\mathcal{A})$ constructed by De Concini and Procesi \cite{f.DeConciniProcesiwonderful}. It
%This  \emph{Chow ring} \index{hyperplane arrangement!Chow ring} of $\mathcal{A}$ 
also has an elegant combinatorial presentation.

\begin{theorem} \cite{f.FeichtnerYuzvinsky} 
Let $\mathcal{A}$ be a central arrangement in $\mathbb{C}^d$ and  $W(\mathcal{A})$ be the maximal wonderful compactification of its complement. 
Then %the \emph{Chow ring} of $\mathcal{A}$ is
\begin{equation}\label{f.e:Chow}
H^*(W(\mathcal{A}), {\mathbb{Z}}) \cong S_{\mathcal{A}} / (I_{\mathcal{A}} + J_{\mathcal{A}})
\end{equation}
where 
\begin{align*}
&
S_{\mathcal{A}} = \mathbb{Z}[\,x_F \, : \, \emptyset \subsetneq F \subsetneq \mathcal{A} \textrm{ is a flat of } \mathcal{A}\,], \\
&
I_{\mathcal{A}} = \langle x_{F_1}x_{F_2} \, : \, F_1, F_2 \textrm{ are incomparable proper flats} \rangle, \\
&
J_{\mathcal{A}} = \left\langle \sum_{F \ni i} x_F - \sum_{F \ni j} x_F , : \, i \neq j \textrm { in } \mathcal{A} \right\rangle.
\end{align*}
\end{theorem}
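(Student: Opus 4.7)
The plan is to follow the blow-up construction of $W(\mathcal{A})$ due to De Concini and Procesi~\cite{f.DeConciniProcesiwonderful}, and then read off the cohomology by combining the blow-up formula with nested-set combinatorics. Recall that $W(\mathcal{A})$ is obtained from a smooth compactification of the ambient space by iteratively blowing up the strict transforms of the flats of $\mathcal{A}$, proceeding in order of increasing dimension. Each blow-up of a smooth codimension-$c$ subvariety $Y \subset X$ introduces an exceptional divisor $E \cong \mathbb{P}(N_{Y/X})$, and Keel's formula expresses $H^*(\mathrm{Bl}_Y X)$ as a quotient of $H^*(X)[e]$ (with $e=[E]$) by the ideal generated by $e \cdot \ker(H^*(X) \to H^*(Y))$ together with the Chern polynomial of $N_{Y/X}$ evaluated at $-e$. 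Iterating this construction, $H^*(W(\mathcal{A}))$ is generated over $H^*(\mathbb{P}^d)$ by divisor classes $x_F := [D_F]$ indexed by the proper nontrivial flats of $\mathcal{A}$.

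Next I would extract the two families of relations. The $I_{\mathcal{A}}$-relations $x_{F_1} x_{F_2} = 0$ for incomparable flats $F_1, F_2$ hold because the wonderful construction separates the strict transforms of incomparable flats: in $W(\mathcal{A})$ the divisors $D_{F_1}$ and $D_{F_2}$ are disjoint unless $F_1 \subseteq F_2$ or $F_2 \subseteq F_1$. This is the geometric shadow of the nested-set condition of De Concini-Procesi, under which only chains of flats yield divisors with non-empty simultaneous intersection. The $J_{\mathcal{A}}$-relations arise from linear equivalences among hyperplane classes: in $\mathbb{P}^d$ every two hyperplane classes coincide, and for each $H_i \in \mathcal{A}$ its strict transform satisfies $[\widetilde{H_i}] = [H_i] - \sum_{F \ni i} x_F$ in $H^2(W(\mathcal{A}))$, where the sum ranges over proper flats $F$ containing $H_i$. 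Equating such identities for $i \neq j$ yields $\sum_{F \ni i} x_F - \sum_{F \ni j} x_F = 0$.

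The main obstacle is verifying that $I_{\mathcal{A}} + J_{\mathcal{A}}$ exhausts the relations. I would argue this by showing that the quotient $S_{\mathcal{A}}/(I_{\mathcal{A}} + J_{\mathcal{A}})$ has an additive basis of monomials indexed by pairs $(\mathcal{N}, \mathbf{a})$ where $\mathcal{N}$ is a nested set of flats and $\mathbf{a}$ is an admissible multiexponent, and then checking that the corresponding Hilbert series matches the Poincar\'e polynomial of $W(\mathcal{A})$. The latter can be computed from the natural stratification of $W(\mathcal{A})$ by wonderful strata indexed by nested sets, and ultimately reduces to a suitable evaluation of the characteristic polynomial $\chi(\mathcal{A};q)$ from Section~\ref{f.sec:charpoly}. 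Matching a combinatorial monomial basis on one side with a geometric cell decomposition on the other is the technical core of Feichtner-Yuzvinsky's approach, and replacing each step by an explicit Gr\"obner-basis computation in $S_{\mathcal{A}}$ provides a self-contained (if lengthy) verification.
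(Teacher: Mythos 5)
The paper does not prove this theorem: Section 4 is explicitly a collection of results ``without proofs,'' so there is no in-paper argument to compare against. Assessing your sketch on its own terms, the high-level strategy -- iterated blow-ups following De Concini--Procesi, a blow-up formula to propagate cohomology, disjointness of boundary divisors for incomparable flats giving $I_{\mathcal{A}}$, linear equivalence of pulled-back hyperplane classes giving $J_{\mathcal{A}}$, then a dimension count to show these exhaust the relations -- is the right skeleton and is faithful to the Feichtner--Yuzvinsky / De Concini--Procesi approach.

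Two details need repair. In the maximal building set the rank-one flats $\{H_i\}$ are themselves proper flats, and their classes $x_{\{H_i\}}$ are the strict transforms $[\widetilde{H_i}]$, not exceptional divisors; so the identity $[\widetilde{H_i}] = [H_i] - \sum_{F \ni i} x_F$, with the sum over \emph{all} proper flats containing $i$, double-counts $\{H_i\}$. The clean statement is $\pi^* h = \sum_{F \ni i} x_F$ (pullback of the hyperplane class from $\mathbb{P}(V)$), from which $J_{\mathcal{A}}$ follows immediately for any $i \neq j$ and which also shows $h$ is already expressible in the $x_F$'s. Second, the Poincar\'e polynomial of $W(\mathcal{A})$ is \emph{not} an evaluation of $\chi(\mathcal{A};q)$: its Betti numbers are governed by chains of flats together with their rank-gap data (equivalently, by the $h$-vector of the nested-set complex), not by the M\"obius-function data that $\chi$ records. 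So the ``ultimately reduces to an evaluation of $\chi$'' claim should be replaced by the actual nested-set count, and the degree-by-degree match with the quotient ring is precisely the monomial-basis/Gr\"obner argument that you correctly flag as the technical core but do not carry out. Neither point derails the overall plan, but both would require real work to make the proof complete.
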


One may use (\ref{f.e:Chow}) as the definition of the \emph{Chow ring} of any matroid. This ring is one of the crucial ingredients in the solution to the following central problem in matroid theory,  conjectured by Rota in 1970 \cite{f.RotaICM}.

\begin{theorem}  \cite{f.AdiprasitoHuhKatz, f.Huh, f.HuhKatz}
The characteristic polynomial of an arrangement
\[
\chi_{\mathcal{A}}(q) = q^n - a_{n-1}q^{n-1} + a_{n-2}q^{n-2} - \cdots + (-1)^n a_n q^0
\]
has coefficients alternating signs, so $a_i \geq 0$. Furthermore, the coefficients are unimodal and even log-concave, that is,
\begin{align*}
a_1 \leq a_2 \leq \cdots \leq a_{i-1} \leq a_i \geq a_{i+1} \geq \cdots \geq a_n \qquad & \textrm{for some  $i$, and} \\
a_{j-1}a_{j+1} \leq a_j^2 \qquad & \textrm{for all }j.
\end{align*}

%\[
%a_1 \leq a_2 \leq \cdots \leq a_{i-1} \leq a_i \geq a_{i+1} \geq \cdots \geq a_n \qquad \textrm{for some  $i$, and}
%\]
%\[
%a_{j-1}a_{j+1} \leq a_j^2 \qquad \textrm{for all }j.
%\]
\end{theorem}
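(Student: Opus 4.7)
The plan is to deduce all three assertions from Kähler-theoretic properties of the Chow ring $A^*(\mathcal{A}) := S_\mathcal{A}/(I_\mathcal{A} + J_\mathcal{A})$ appearing in (\ref{f.e:Chow}). First, the alternating-sign statement $a_i \geq 0$ is the soft part: by Theorem \ref{f.th:charpoly}, $\chi(\mathcal{A};q) = (-1)^r q^{d-r} T(\mathcal{A};1-q,0)$, and since $T(\mathcal{A};x,y)$ has non-negative integer coefficients (e.g.\ by the activities formula), substituting $x=1-q$ produces exactly the alternating-sign pattern with $a_i \geq 0$. Also, log-concavity for a non-negative sequence with no internal zeros forces unimodality, so once log-concavity is in hand the remaining inequality chain follows automatically after a separate check that the $a_i$ have no internal zeros (a standard flat-count argument).

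The heart of the proof is reinterpreting the $a_i$ as intersection numbers in $A^*(\mathcal{A})$ and then invoking a Hodge-Riemann-type inequality. Following Feichtner--Yuzvinsky and de Concini--Procesi, fix any element $i \in \mathcal{A}$ and introduce the two classes
\[
\alpha \;=\; \sum_{F \ni i} x_F, \qquad \beta \;=\; \sum_{F \not\ni i} x_F
\]
in $A^1(\mathcal{A})$. The relations $J_\mathcal{A}$ ensure these are independent of the choice of $i$ up to the relations. The first key step is to show that $A^*(\mathcal{A})$ is a graded Artinian $\mathbb{Z}$-algebra of socle degree $r-1$ satisfying Poincaré duality, with a canonical degree map $\deg : A^{r-1}(\mathcal{A}) \to \mathbb{Z}$. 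The second step, which one verifies by a direct computation of $\deg(\alpha^k \beta^{r-1-k})$ using the Möbius-function expansion of $\chi(\mathcal{A};q)$, is the identity
\[
|a_{k}| \;=\; \deg\bigl(\alpha^{\,k}\,\beta^{\,r-1-k}\bigr)
\]
(up to a shift depending on the dimension convention); this matches the coefficients of $\chi$ to mixed intersection numbers of two fixed nef classes.

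With that identification in place, log-concavity $a_{j-1}a_{j+1} \leq a_j^2$ is exactly the mixed Hodge-Riemann inequality in degree $1$ applied to $(\alpha,\beta)$: for any two nef (ample, in the limiting sense) classes $L_1,L_2$ in a graded algebra satisfying Poincaré duality, hard Lefschetz, and the Hodge-Riemann relations in degree $1$, the Khovanskii--Teissier inequality gives
\[
\deg(L_1^{k-1}L_2^{r-k})\cdot\deg(L_1^{k+1}L_2^{r-k-2}) \;\leq\; \deg(L_1^{k}L_2^{r-k-1})^2.
\]
Applying this with $L_1 = \alpha$, $L_2 = \beta$ yields the desired log-concavity.

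The main obstacle is of course establishing that $A^*(\mathcal{A})$ satisfies Poincaré duality, hard Lefschetz, and the Hodge-Riemann relations, since $\mathcal{A}$ need not be realizable and so one cannot borrow these facts from classical algebraic geometry on $W(\mathcal{A})$. This is exactly the content of the Adiprasito--Huh--Katz work: one proves these three properties simultaneously by induction on the rank of the underlying matroid, propagating them along matroidal analogs of semi-small maps corresponding to star-subdivisions of the Bergman fan (i.e.\ to matroid flips between order filters of flats). Each such flip decomposes $A^*$ as a module over a smaller Chow ring plus explicit Lefschetz-type summands, and one checks by hand that Poincaré duality, hard Lefschetz, and Hodge-Riemann persist through the flip. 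This combinatorial Hodge theory is the technically delicate step; once it is established, the log-concavity theorem is the immediate corollary described above.
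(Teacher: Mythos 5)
The paper does not actually prove this theorem; Section~4 explicitly collects results without proofs and cites \cite{f.AdiprasitoHuhKatz, f.Huh, f.HuhKatz}. So there is no internal argument to compare against, but your sketch can be assessed on its own terms. Broadly it is a faithful account of the Adiprasito--Huh--Katz strategy: dispose of sign-alternation via Whitney's theorem and positivity of Tutte coefficients, reduce unimodality to log-concavity plus no internal zeros, identify coefficients of the characteristic polynomial with mixed degrees of the nef classes $\alpha = \sum_{F \ni i} x_F$ and $\beta = \sum_{F \not\ni i} x_F$ in the Chow ring, and deduce log-concavity from the degree-one Hodge--Riemann (Khovanskii--Teissier) inequality, with the genuinely hard content being the combinatorial Hodge theory established by induction along a sequence of fans interpolating between a boundary complex of a simplex and the Bergman fan.

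Two points deserve correction. First, the degree identity you state does not and cannot hold for the coefficients $a_k$ of $\chi$ itself: the Chow ring has socle degree $r-1$, so there are only $r$ mixed degrees $\deg(\alpha^k\beta^{r-1-k})$ with $0 \leq k \leq r-1$, whereas $\chi$ has $r+1$ coefficients. The actual identity in AHK is for the \emph{reduced} characteristic polynomial $\overline\chi(q) = \chi(q)/(q-1)$, and this is not ``a shift depending on dimension convention'' -- it is a division by $(q-1)$. One then recovers log-concavity of $\chi$ by convolving the resulting log-concave coefficient sequence with $(1,1)$; this final step is missing from your sketch. Second, AHK do not phrase their induction in terms of semi-small maps; they work with ``matroidal flips'' between order filters of the lattice of flats, realized as stellar subdivisions of fans. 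The semi-small language comes from later work (Braden--Huh--Matherne--Proudfoot--Wang) and imports structure that AHK do not use. These are small slips in an otherwise accurate outline of a proof far beyond what could be reproduced in a survey of this length.
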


Rota's conjecture was proved very recently by Huh \cite{f.Huh} for arrangements over fields of characteristic $0$, by Huh and Katz \cite{f.HuhKatz} for arrangements over arbitrary fields, and by Adiprasito, Huh, and Katz for arbitrary matroids \cite{f.AdiprasitoHuhKatz}. 

%Huh's initial proof relies on algebro-geometric constructions coming from hyperplane arrangements. The proof of Adiprasito, Huh, and Katz is purely combinatorial, but it is very strongly inspired by those geometric considerations.

\subsection{{{Algebras from arrangements}}} There are other natural algebras related to the Tutte polynomial of an arrangement arising in commutative algebra, hyperplane arrangements, box splines, and index theory; we discuss a few. %Other important algebras associated to a general matroid are discussed in Chapter ??? (FalkKung).

Throughout this section we assume our arrangement ${\mathcal{A}}$ is central. 
For each hyperplane $H $ in a hyperplane arrangement ${\mathcal{A}}$ in ${{\mathbbm{k}}}^d$ let $l_H$ be a linear functional in $({{\mathbbm{k}}}^d)^*$ such that $H$ is given by the equation $l_H(x)=0$.

Our first example is a family of graded vector spaces $C_{{\mathcal{A}},k}$ associated to an arrangement $\mathcal{A}$. For $k=0,-1,-2$, they 
arose in the theory of splines \cite{f.DahmenMicchelli, f.DeConciniProcesibook, f.HoltzRon}  as the spaces of solutions to certain systems of differential equations.

\newpage

\begin{theorem} Let $\mathcal{A}$ be an arrangement over a field of characteristic zero.
 \begin{enumerate}
\item \cite{f.Wagner} 
Let $C_{{\mathcal{A}},0} = {\mathrm{span }} \{\prod_{H \in {\mathcal{B}}} l_H \, : \, {\mathcal{B}} \subseteq {\mathcal{A}}\}$. This is a subspace of a polynomial ring in $d$ variables, graded by degree. Its dimension is $T(2,1)$ and its Hilbert polynomial is 
\[
\textrm{Hilb}(C_{{\mathcal{A}}, 0}; q) = \sum_{j \geq 0} \dim (C_{{\mathcal{A}},0})_j \, q^j = 
q^{d-r}T\left(1+q, \frac1q\right).
\]

\item \cite{f.Ardilathesis, f.ArdilaPostnikov, f.DahmenMicchelli, f.DeConciniProcesibook, f.HoltzRon, f.PostnikovShapiro, f.PostnikovShapiroShapiro} More generally, let $C_{{\mathcal{A}},k}$ be the vector space of polynomial functions such that the restriction of $f$ to any line $h$ has degree at most $\rho_{{\mathcal{A}}}(h)+k$, where $\rho_{{\mathcal{A}}}(h)$ is the number of hyperplanes of ${\mathcal{A}}$ not containing $h$. It is not obvious, but true, that this definition of $C_{{\mathcal{A}},0}$ matches the one above. 
We have
%Then %In fact, there are similar (but more complicated) descriptions for $k \geq -2$. We have 
\[
\textrm{Hilb}(C_{{\mathcal{A}}, -1}; q) = q^{d-r}T\left(1, \frac1q\right), \qquad \textrm{Hilb}(C_{{\mathcal{A}}, -2}; q) = q^{d-r}T\left(0, \frac1q\right)
\]
and similar formulas hold for any $k \geq -2$. 
\end{enumerate}
\end{theorem}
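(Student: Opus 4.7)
The plan is to invoke the Tutte--Grothendieck universality theorem (Theorem~\ref{f.th:Tutterecursion}) by producing a deletion-contraction recursion for the Hilbert series $h_{\mathcal{A}, k}(q) := \textrm{Hilb}(C_{\mathcal{A}, k}; q)$. Once we show that $h_{\mathcal{A},k}$ is a generalized Tutte--Grothendieck invariant with appropriate constants $a,b \in \mathbb{Z}[q]$, the universality theorem identifies $h_{\mathcal{A}, k}$ up to the claimed normalization; specializing at $q = 1$ then recovers the dimension statement $\dim C_{\mathcal{A}, 0} = T(\mathcal{A}; 2, 1)$.

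Before running the induction, I would first verify (for $k = 0$) that the two descriptions of $C_{\mathcal{A}, 0}$ --- the span of products $\prod_{H \in \mathcal{B}} l_H$ and the line-restriction space of part~(2) --- coincide. This identity is standard in zonotopal algebra \cite{f.HoltzRon, f.ArdilaPostnikov, f.DeConciniProcesibook} but not obvious; the cleanest proof passes through a dual (Macaulay inverse system) formulation, writing each candidate space as the annihilator of an explicit ideal of differential operators and checking that the two ideals coincide. Once this is done, the line-restriction framework handles all three values $k = 0, -1, -2$ uniformly.

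For the inductive step, fix a hyperplane $H \in \mathcal{A}$ that is neither a loop nor a coloop, with defining linear form $l_H$. I would construct a short exact sequence of graded vector spaces relating $C_{\mathcal{A}, k}$ to its deletion and contraction, schematically
\[
0 \longrightarrow C_{\mathcal{A}/H, k} \longrightarrow C_{\mathcal{A}, k} \xrightarrow{\;\pi\;} C_{\mathcal{A}\backslash H, k} \longrightarrow 0,
\]
where the inclusion is the natural lift of a polynomial on $H$ to the ambient space (multiplied by $l_H$ so that degrees align), and $\pi$ is a suitable projection picking out the ``new'' contribution of $H$. Taking Hilbert series produces a linear recursion in $q$ which, combined with the rank/dimension shifts $(r(\mathcal{A}\backslash H), \dim) = (r, d)$ and $(r(\mathcal{A}/H), \dim) = (r-1, d-1)$, matches the Tutte recursion $T(\mathcal{A}) = T(\mathcal{A}\backslash H) + T(\mathcal{A}/H)$ under the specializations of the theorem. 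The base cases are then handled directly: a single coloop contributes the factor $(1+q)$ to $h_{\mathcal{A}, 0}$, matching the factor $x$ that a coloop contributes to $T(\mathcal{A})$ under the substitution $x \mapsto 1+q$, and similarly for $k = -1, -2$.

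The main obstacle is establishing exactness of the short exact sequence. That multiplication by $l_H$ respects the line-degree conditions is a bookkeeping check: one has $\rho_\mathcal{A}(h) = \rho_{\mathcal{A}/H}(h)$ for lines $h \subseteq H$ and $\rho_\mathcal{A}(h) = \rho_{\mathcal{A}\backslash H}(h) + 1$ for all other lines, which ensures the degree conditions are preserved in each direction. Injectivity of multiplication by $l_H$ is immediate. The delicate points are the surjectivity of $\pi$ and the precise identification of its kernel with the image of the inclusion. In practice these are proved by exhibiting explicit monomial bases of $C_{\mathcal{A}, k}$ indexed by the bases $\mathcal{B}$ of the matroid $M(\mathcal{A})$, weighted by the internal and external activities --- the same activities that enumerate the monomials of $T(\mathcal{A};x,y)$. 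Constructing and verifying this activity basis --- which directly encodes the Tutte polynomial --- is the technical heart of the argument.
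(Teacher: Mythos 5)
Your overall strategy---build a graded deletion--contraction exact sequence for $C_{\mathcal{A},k}$ and then invoke the Tutte--Grothendieck universality theorem (Theorem~\ref{f.th:Tutterecursion})---is the right one, and you have correctly flagged the two hard steps (matching the two definitions of $C_{\mathcal{A},0}$, and proving exactness, e.g.\ via an activity basis). But the short exact sequence you write down has the deletion and the contraction in the wrong positions, and the inclusion map you propose is not well-defined. You claim an inclusion $C_{\mathcal{A}/H,k}\hookrightarrow C_{\mathcal{A},k}$ by ``lift a polynomial $g$ on $H$ to $\tilde g$ on $V$ and multiply by $l_H$.'' Two lifts of $g$ differ by a multiple of $l_H$, so $l_H\tilde g$ genuinely depends on the choice of $\tilde g$; worse, your own bookkeeping gives you no control over $\deg(\tilde g|_h)$ for lines $h\not\subset H$, so $l_H\tilde g$ need not lie in $C_{\mathcal{A},k}$ at all. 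The sequence that actually exists for these $\mathcal{P}$-type spaces (and that is used in Holtz--Ron, Ardila--Postnikov, De Concini--Procesi) runs the other way: the \emph{deletion} is the sub and the \emph{contraction} is the quotient,
\[
0 \longrightarrow C_{\mathcal{A}\backslash H,\,k}\ \xrightarrow{\ \cdot\, l_H\ }\ C_{\mathcal{A},\,k}\ \xrightarrow{\ \mathrm{restriction\ to\ }H\ }\ C_{\mathcal{A}/H,\,k}\ \longrightarrow\ 0.
\]
Here multiplication by $l_H$ is canonical, injective, shifts degree by one, and lands in $C_{\mathcal{A},k}$ precisely because $\rho_{\mathcal{A}}(h)=\rho_{\mathcal{A}\backslash H}(h)+1$ for $h\not\subset H$; restriction to $H$ is well-defined because $\rho_{\mathcal{A}}(h)=\rho_{\mathcal{A}/H}(h)$ for $h\subset H$. (You may be thinking of the dual Dahmen--Micchelli $\mathcal{D}$-spaces cut out by differential operators, where the sequence is $0\to \mathcal{D}_{\mathcal{A}/H}\to \mathcal{D}_\mathcal{A}\to \mathcal{D}_{\mathcal{A}\backslash H}\to 0$; that is not the space in this theorem.)

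This mis-identification propagates into the constants, so your plan would produce the wrong evaluation even if every step were carried out. The correct sequence gives $h_{\mathcal{A},k}(q)=q\,h_{\mathcal{A}\backslash H,k}(q)+h_{\mathcal{A}/H,k}(q)$, i.e.\ a generalized Tutte--Grothendieck invariant with $a=q$ on the deletion and $b=1$ on the contraction; Theorem~\ref{f.th:Tutterecursion}.2 then yields $h_{\mathcal{A},k}(q)=q^{\,n-r}\,T\!\left(\mathcal{A};\,f(C),\,f(L)/q\right)$ with $n=|\mathcal{A}|$, and plugging in the single-coloop value $f(C)=1+q$ (for $k=0$), $1$, $0$ (for $k=-1,-2$) recovers the stated formulas. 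Your sequence, if it were exact, would instead give $h_{\mathcal{A}}=h_{\mathcal{A}\backslash H}+q\,h_{\mathcal{A}/H}$, i.e.\ $a=1$, $b=q$, and universality would then produce $q^{\,r}\,T\!\left(\mathcal{A};\,f(C)/q,\,f(L)\right)$, which is a different and incorrect specialization. As a side remark, running the normalization in Theorem~\ref{f.th:Tutterecursion}.2 carefully also shows the exponent must be $|\mathcal{A}|-r$; the exponent $d-r$ printed in the statement is a typo (already for three concurrent lines in $\mathbb{k}^2$, where $d=r=2$ but $n=3$, the displayed right-hand side is not even a polynomial). Finally, be aware that contractions of simple arrangements produce parallel elements and, further down the induction, loops, so to invoke universality cleanly you must either work with multiarrangements from the start or verify the degenerate base cases directly.
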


Another example that arises in several contexts is  the following.

\begin{theorem}
\cite{f.BrionVergne, f.ProudfootSpeyer, f.Teraoalgebras} Let $R({\mathcal{A}})$ be the vector space of rational functions whose poles are in ${\mathcal{A}}$. It is the ${\mathbbm{k}}$-algebra of rational functions generated by $\{1/l_H \, : \, H \in {\mathcal{A}}\}$, and we grade it so that $\deg(1/l_H)=1$. Then
\[
\textrm{Hilb}(R({\mathcal{A}}); q) = \frac{q^d}{(1-q)^d} T\left( \frac1q, 0 \right).
\]
\end{theorem}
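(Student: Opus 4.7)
My plan is to invoke the universal property of the Tutte polynomial (Theorem~\ref{f.th:Tutterecursion}) applied to the function $f(\mathcal{A}) := \mathrm{Hilb}(R(\mathcal{A});q) \in \mathbb{Q}(q)$. The strategy is to identify the Tutte-Grothendieck parameters of $f$: specifically, I aim to show that $f$ satisfies the recursion with $a = 1$, $b = q/(1-q)$, $f(C) = 1/(1-q)$, and $f(L) = 0$. Substituting into the universality formula yields
\[
f(\mathcal{A}) = a^{n-r} b^r \, T\!\left(\mathcal{A}; \tfrac{f(C)}{b},\, \tfrac{f(L)}{a}\right) = \left(\frac{q}{1-q}\right)^{\!r} T\!\left(\mathcal{A};\,\tfrac{1}{q},\,0\right),
\]
which coincides with the stated identity when $\mathcal{A}$ is essential ($r = d$); the general case reduces to the essential one by essentialization.

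The critical step is the deletion-contraction recurrence, established via a partial fractions decomposition. Fix $H \in \mathcal{A}$ that is neither a loop nor a coloop, and write $z := l_H$. I claim the direct-sum decomposition of graded $\mathbbm{k}$-vector spaces
\[
R(\mathcal{A}) \;=\; R(\mathcal{A}\setminus H) \;\oplus\; \bigoplus_{k \geq 1} z^{-k}\,\iota\bigl(R(\mathcal{A}/H)\bigr),
\]
where $\iota$ is a $\mathbbm{k}$-linear lift of $R(\mathcal{A}/H)$ --- whose generators are the restrictions $l_{H'}|_H$ for $H' \neq H$ --- into the subalgebra of $R(\mathcal{A})$ generated by $\{1/l_{H'} : H' \neq H\}$. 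The decomposition follows from the Laurent expansion of $g \in R(\mathcal{A})$ in powers of $z$ at the generic point of $H$: since $1/l_{H'} = \sum_{k \geq 0} (-c_{H'})^k z^k / \widetilde{l}_{H'}^{k+1}$ for $H' \neq H$ (writing $l_{H'} = c_{H'} z + \widetilde{l}_{H'}$ with $\widetilde{l}_{H'} = l_{H'}|_H$), every monomial $\prod_{H'}(1/l_{H'})^{a_{H'}}$ expands as $z^{-a_H}$ times a power series in $z$ with coefficients in $R(\mathcal{A}/H)$. The coefficient of $z^{-k}$ for $k \geq 1$ is therefore naturally an element of $R(\mathcal{A}/H)$, and the holomorphic part, using the Orlik-Terao relations for circuits through $H$ to eliminate any residual $1/z$ factors, lies in $R(\mathcal{A}\setminus H)$. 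Passing to Hilbert series (noting $\deg z^{-k} = k$) yields
\[
f(\mathcal{A}) = f(\mathcal{A}\setminus H) + \frac{q}{1-q}\, f(\mathcal{A}/H),
\]
so $f$ satisfies the Tutte-Grothendieck recurrence with the claimed $a$ and $b$. For a coloop $H$, the functional $z$ is linearly independent from the other $l_{H'}$, no Orlik-Terao relation involves $z$, and $R(\mathcal{A}) \cong R(\mathcal{A}\setminus H) \otimes_{\mathbbm{k}} \mathbbm{k}[z^{-1}]$ as graded algebras, giving $f(\mathcal{A}) = f(\mathcal{A}\setminus H)/(1-q)$ and hence $f(C) = 1/(1-q)$.

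The main obstacle is making the direct-sum decomposition rigorous. The lift $\iota$ is non-canonical --- if $l_{H_1}|_H$ and $l_{H_2}|_H$ are proportional in $H^*$ for $H_1, H_2$ distinct from $H$, then $1/l_{H_1}$ and the corresponding scalar multiple of $1/l_{H_2}$ project to the same element of $R(\mathcal{A}/H)$ but are genuinely distinct in $R(\mathcal{A})$, so different lifts yield different decompositions. One must verify that despite this ambiguity, the direct sum and the resulting Hilbert series are well-defined, and that the Orlik-Terao relations exactly suffice to rewrite the holomorphic part inside $R(\mathcal{A}\setminus H)$. The cleanest way to handle this is to work with the natural filtration of $R(\mathcal{A})$ by order of pole along $H$: the associated graded is manifestly the right-hand side of the displayed decomposition, and one then checks that any lift $\iota$ splits the filtration, so the Hilbert-series computation is independent of the choice.
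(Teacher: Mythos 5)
The paper presents this theorem without proof (Section 4 explicitly says its results are ``collected without proofs''), so there is no internal proof to compare against; your proposal has to stand on its own and against the cited literature (Terao's deletion--restriction argument, Brion--Vergne's residue decomposition, Proudfoot--Speyer's NBC basis). Your high-level plan is indeed the standard one, and several steps are correct: the identification of the Tutte--Grothendieck parameters $a=1$, $b=q/(1-q)$, $f(C)=1/(1-q)$, $f(L)=0$ does yield $f(\mathcal{A})=(q/(1-q))^{r}\,T(\mathcal{A};1/q,0)$, which matches the stated formula when $\mathcal{A}$ is essential ($r=d$); the coloop case $R(\mathcal{A})\cong R(\mathcal{A}\setminus H)\otimes_{\mathbbm{k}}\mathbbm{k}[z^{-1}]$ is right; and the pole-order filtration $F^k=\{g:\mathrm{ord}_H(g)\ge -k\}$ together with the surjection $F^k\twoheadrightarrow R(\mathcal{A}/H)$, $g\mapsto (z^k g)|_H$, with kernel $F^{k-1}$ and degree shift $k$, is both correct and the right tool.

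The gap is the identification of the bottom of the filtration: you need $F^0=R(\mathcal{A}\setminus H)$, i.e., that every element of $R(\mathcal{A})$ with no pole along $H$ already lies in the subalgebra generated by $\{1/l_{H'}:H'\neq H\}$. You write that the associated graded ``is manifestly'' the displayed direct sum, but only the pieces $F^k/F^{k-1}\cong R(\mathcal{A}/H)$ for $k\ge 1$ are manifest; the claim $F^0=R(\mathcal{A}\setminus H)$ is precisely the nontrivial content. The inclusion $R(\mathcal{A}\setminus H)\subseteq F^0$ is clear, but the reverse is not: a priori a polynomial expression in the $1/l_{H'}$ that uses $1/z$ could be regular along $H$ without being rewritable with no $1/z$ at all, and your appeal to ``the Orlik--Terao relations for circuits through $H$'' points in the right direction but is not an argument --- those relations let you \emph{introduce} $1/z$ into a $z$-free monomial, not obviously eliminate it. Closing this requires real input: e.g.\ the NBC monomial basis of $R(\mathcal{A})$ with respect to an ordering adapted to $H$ (Proudfoot--Speyer), the iterated-residue/simple-fraction decomposition of Brion--Vergne, or a joint induction on $|\mathcal{A}|$ that proves $F^0=R(\mathcal{A}\setminus H)$ alongside the Hilbert-series recursion. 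Until that step is supplied, the recurrence $f(\mathcal{A})=f(\mathcal{A}\setminus H)+\tfrac{q}{1-q}f(\mathcal{A}/H)$ is unproved, and with it the whole argument.
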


\section{{{The Finite Field Method}}} 
\label{f.sec:finitefield} 

Even if one is primarily interested in fields of characteristic zero, it is also quite useful to consider hyperplane arrangements over the finite field $\mathbb{F}_q$ of $q$ elements, where $q$ is a prime power. The following variant of the Tutte polynomial plays an important role.

\begin{definition}
The \emph{coboundary polynomial} \index{hyperplane arrangement!coboundary polynomial} $\overline{\chi}({\mathcal{A}};X,Y)$ is the following simple transformation of the Tutte polynomial:
\begin{equation}\label{f.eq:coboundary}
\overline{\chi}({\mathcal{A}};X,Y) = (Y-1)^rT\left({\mathcal{A}} ; \frac{X+Y-1}{Y-1}, Y\right). 
\end{equation}
A simple change of variables allows us to recover $T({\mathcal{A}};x,y)$ from $\overline{\chi}({\mathcal{A}};X,Y)$.
\end{definition}

Let $\mathcal{A}$ be an arrangement over a field of characteristic zero. Say $\mathcal{A}$ is a \emph{$\mathbb{Q}$-arrangement} if its defining equations have rational coefficients. For any power $q$ of a large enough prime, the equations of $\mathcal{A}$ also define a hyperplane arrangement $\mathcal{A}_q$ over $\mathbb{F}_q$. Say that $\mathcal{A}$ \emph{reduces correctly} over $\mathbb{F}_q$ if the intersection posets of $\mathcal{A}$ and $\mathcal{A}_q$ are isomorphic,  and hence $\mathcal{A}$ and $\mathcal{A}_q$ have the same Tutte polynomial.

\newpage

\begin{theorem}\label{f.th:Tuttefinitefield} (Finite Field Method) \index{hyperplane arrangement!finite field method} \cite{f.ArdilaTutte, f.CrapoRota, f.Greene, f.WelshWhittle} 
\begin{enumerate}
\item
Let ${\mathcal{A}}$ be a hyperplane arrangement of rank $r$ in ${\mathbb{F}}_q^d$. For each point $p \in {\mathbb{F}}_q^d$ let $h(p)$ be the number of hyperplanes of ${\mathcal{A}}$ containing $p$. Then
\begin{equation}\label{f.e:finfield}
\sum_{p \in {\mathbb{F}}_q^d} t^{h(p)} = q^{d-r} \overline{\chi}({\mathcal{A}};q,t).
\end{equation}
\item
Let  ${\mathcal{A}}$ be a $\mathbb{Q}$-arrangement over a field of characteristic zero. For any power $q$ of a large enough prime, $\mathcal{A}$ reduces correctly over $\mathbb{F}_q$, and the coboundary polynomial $\overline{\chi}({\mathcal{A}};q,t) = \overline{\chi}({\mathcal{A}_q};q,t)$ of $\mathcal{A}$ may be computed using (\ref{f.e:finfield}).
\end{enumerate}
\end{theorem}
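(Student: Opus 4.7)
The plan is to prove Part 1 by a direct expansion of $\sum_{p} t^{h(p)}$ into a sum over central subarrangements, and then to match this against the analogous expansion of $\overline{\chi}(\mathcal{A};q,t)$ obtained from the subset definition (\ref{f.th:Tutteformula}) of the Tutte polynomial. First I would rewrite $t^{h(p)} = \prod_{H \in \mathcal{A}} t^{\mathbf{1}_{p \in H}} = \prod_{H \in \mathcal{A}} \bigl(1 + (t-1)\mathbf{1}_{p \in H}\bigr)$ and expand the product to get
\[
t^{h(p)} = \sum_{\mathcal{B} \subseteq \mathcal{A}} (t-1)^{|\mathcal{B}|}\, \mathbf{1}_{p \in \bigcap \mathcal{B}}.
\]
Summing over $p \in \mathbb{F}_q^d$ and swapping the order of summation yields $\sum_{p} t^{h(p)} = \sum_{\mathcal{B} \subseteq \mathcal{A}} (t-1)^{|\mathcal{B}|}\, |\bigcap \mathcal{B}|$. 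Since $|\bigcap \mathcal{B}| = q^{d-r(\mathcal{B})}$ when $\mathcal{B}$ is central and $0$ otherwise, only central subsets contribute.

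On the other side, the change of variables $x = (X+Y-1)/(Y-1)$, $y = Y$ in (\ref{f.th:Tutteformula}) gives $x-1 = X/(Y-1)$, and the prefactor $(Y-1)^r$ in (\ref{f.eq:coboundary}) exactly cancels the negative powers of $(Y-1)$ that appear. After simplification one finds
\[
\overline{\chi}(\mathcal{A};X,Y) = \sum_{\mathcal{B} \text{ central}} X^{\,r - r(\mathcal{B})}(Y-1)^{|\mathcal{B}|}.
\]
Setting $X=q$, $Y=t$ and multiplying by $q^{d-r}$ gives $\sum_{\mathcal{B}\text{ central}} q^{\,d-r(\mathcal{B})}(t-1)^{|\mathcal{B}|}$, which matches the point-counting sum above and establishes (\ref{f.e:finfield}).

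For Part 2, the intersection poset of a $\mathbb{Q}$-arrangement is encoded by finitely many integer conditions: which minors of the coefficient matrix (and of the augmented matrices, for consistency of affine systems) vanish. After clearing denominators to work over $\mathbb{Z}$, a given nonzero minor remains nonzero in $\mathbb{F}_q$ provided the underlying prime $p$ does not divide its value. Only finitely many primes divide any of the (finitely many) relevant minors, so for all $q$ whose underlying prime is large enough, $\mathcal{A}_q$ has the same intersection poset as $\mathcal{A}$; because the Tutte polynomial is determined by this semimatroid data (via e.g.\ Theorem \ref{f.th:Tutterecursion} applied recursively, or directly from the subset formula), $T(\mathcal{A}_q) = T(\mathcal{A})$ and hence $\overline{\chi}(\mathcal{A}_q;q,t) = \overline{\chi}(\mathcal{A};q,t)$. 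Part 1 applied to $\mathcal{A}_q$ then evaluates this common coboundary polynomial.

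The main obstacle is the bookkeeping in the change of variables of Part 1: unpacking (\ref{f.eq:coboundary}) through (\ref{f.th:Tutteformula}) requires keeping careful track of the exponents of $(Y-1)$, both positive and negative, to verify that the sum over all subsets of $\mathcal{A}$ collapses onto central ones with the correct weights. The rest—the inclusion-exclusion expansion and the generic-reduction argument in Part 2—is standard.
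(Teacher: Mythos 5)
Your proposal is correct. The paper is a survey and states Theorem~\ref{f.th:Tuttefinitefield} without proof, citing the original sources; your argument is precisely the standard one found there. The identity you derive,
\[
\overline{\chi}(\mathcal{A};X,Y)=\sum_{\mathcal{B}\subseteq\mathcal{A},\ \mathcal{B}\ \mathrm{central}} X^{\,r-r(\mathcal{B})}(Y-1)^{|\mathcal{B}|},
\]
does follow from (\ref{f.eq:coboundary}) and (\ref{f.th:Tutteformula}): after substituting $x-1=X/(Y-1)$ and $y-1=Y-1$ and multiplying by $(Y-1)^r$, the $(Y-1)$-exponent collapses to $r-(r-r(\mathcal{B}))+(|\mathcal{B}|-r(\mathcal{B}))=|\mathcal{B}|\ge 0$, so the bookkeeping you flagged as the main obstacle produces no residual negative powers. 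Matching this (at $X=q$, $Y=t$, scaled by $q^{d-r}$) against $\sum_p t^{h(p)}=\sum_{\mathcal{B}\ \mathrm{central}}(t-1)^{|\mathcal{B}|}q^{d-r(\mathcal{B})}$, which you obtain from the expansion $t^{h(p)}=\prod_{H\in\mathcal{A}}\bigl(1+(t-1)\mathbf{1}_{p\in H}\bigr)$ and the count $|\bigcap\mathcal{B}|=q^{d-r(\mathcal{B})}$ for central $\mathcal{B}$ (and $0$ otherwise), gives Part~1. Part~2 is also correctly argued: centrality and rank of each affine subsystem are governed by which minors of the coefficient and augmented integer matrices vanish, and those vanishing patterns are preserved under reduction modulo every prime outside the finite set of divisors of the nonzero minors.
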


Theorem \ref{f.th:Tuttefinitefield} is one of the most effective methods for computing Tutte polynomials of a hyperplane arrangement ${\mathcal{A}}$.
It reduces the computation of $T({\mathcal{A}};x,y)$ to an enumerative problem over finite fields, which can sometimes be solved \cite{ f.ArdilaTutte, f.Athanasiadis}.
This method also works for any graph or any matroid realizable over $\mathbb{Q}$, since they can be regarded as hyperplane arrangements as well.
Let us illustrate this with two simple examples. 

\begin{example}
If we think of the arrangement of Figure \ref{f.fig:arr} as living inside the ambient space $\mathbb{F}_q^3$, a careful enumeration gives
\[
\overline{\chi}(\mathcal{A};q,t) = t^4 + (q-1)t^3 + 3(q-1)t^2 + (4q^2-9q+5)t+(q^3-4q^2+5q-2),
\]
in agreement with Example \ref{f.ex:Tutte}.
\end{example}

\begin{proposition}
For the \emph{coordinate arrangement} $\mathcal{H}_n$ consisting of the $n$ coordinate hyperplanes in ${\mathbbm{C}}^n$,
\[
\overline{\chi}({\mathcal{H}_n};X,Y) = (X+Y-1)^n.
\]
\end{proposition}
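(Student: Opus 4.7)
The natural plan is to apply the Finite Field Method (Theorem \ref{f.th:Tuttefinitefield}) to the coordinate arrangement, which is defined by the equations $x_1 = 0, \ldots, x_n = 0$ with integer coefficients, so it reduces correctly over every $\mathbb{F}_q$. Since the hyperplanes are linearly independent, the rank equals the ambient dimension, $r = d = n$, so the prefactor $q^{d-r}$ in \eqref{f.e:finfield} equals $1$, and the task reduces to evaluating the sum
\[
\sum_{p \in \mathbb{F}_q^n} t^{h(p)}
\]
where $h(p)$ counts the coordinate hyperplanes $\{x_i = 0\}$ containing $p$.

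The key observation is that $h(p)$ is simply the number of indices $i$ such that the $i$-th coordinate $p_i$ equals $0$, so the statistic $h$ is a sum of $n$ independent single-coordinate statistics. This lets the sum factor: writing $h(p) = \sum_{i=1}^n \mathbbm{1}[p_i = 0]$, we get
\[
\sum_{p \in \mathbb{F}_q^n} t^{h(p)} = \prod_{i=1}^n \sum_{p_i \in \mathbb{F}_q} t^{\mathbbm{1}[p_i=0]} = \prod_{i=1}^n \bigl(t + (q-1)\bigr) = (t+q-1)^n,
\]
since exactly one value of $p_i$ (namely $0$) contributes $t$ and the other $q-1$ values each contribute $1$.

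Combining these, Theorem \ref{f.th:Tuttefinitefield}(1) gives $\overline{\chi}(\mathcal{H}_n; q, t) = (q + t - 1)^n$ for every prime power $q$. Since $\overline{\chi}(\mathcal{H}_n; X, Y)$ and $(X+Y-1)^n$ are polynomials in two variables agreeing on infinitely many values of $X = q$ (and all $Y = t$), they must be equal as polynomials, yielding $\overline{\chi}(\mathcal{H}_n; X, Y) = (X+Y-1)^n$. There is no real obstacle here; the proof is almost immediate once one notices that $h$ decomposes coordinate-by-coordinate, and the example is included precisely to showcase how effortlessly the finite field method handles a separable arrangement.
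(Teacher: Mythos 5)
Your proof is correct and uses the same finite field method the paper uses; the only (cosmetic) difference is that you factor the sum $\sum_{p} t^{h(p)}$ coordinate-by-coordinate into $\prod_i (t+q-1)$, whereas the paper stratifies points by the number $k$ of zero coordinates to get $\sum_k \binom{n}{k}(q-1)^{n-k}t^k$ and then invokes the binomial theorem. These are the same computation written two ways, so the proof matches the paper's.
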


\begin{proof}
Regard $\mathcal{H}_n$ as an arrangement over $\mathbb{F}_q$ for a power $q$ of a large prime. By (\ref{f.e:finfield}), we need to count the  points in $\mathbb{F}_q^n$ that are on exactly $k$ of the $n$ hyperplanes $x_i=0$ for $1 \leq i \leq n$. To choose such a point, we may first choose which $k$ hyperplanes it is on, and then choose its remaining $n-k$ non-zero coordinates independently, for a total of ${n \choose k} (q-1)^{n-k}$ choices.
It follows that 
\[
\overline{\chi}({\mathcal{H}_n};q,t) = 
\sum_{p \in {\mathbb{F}}_q^n} t^{h(p)} =
\sum_{k=0}^n {n \choose k} (q-1)^{n-k}t^k = (q+t-1)^n
\]
as desired.
\end{proof}

\section{{A Catalog of Characteristic and Tutte Polynomials}}\label{f.sec:computing} 

Computing Tutte polynomials is extremely difficult in general, as explained in %Chapter ??? (complexity) and
\cite{f.Welshcomplexity}. However, the computation is  possible in some cases. We now survey some of the most interesting examples; see \cite{f.MerinoRamirezetal} for others. Some of these formulas are best expressed in terms of the coboundary polynomial $\overline{\chi}({\mathcal{A}};X,Y)$, which is equivalent to the Tutte polynomial $T({\mathcal{A}};x,y)$ by (\ref{f.eq:coboundary}).
Almost all of them are most easily proved using the finite field method of Theorems \ref{f.th:charpoly}.3 and \ref{f.th:Tuttefinitefield}.

\begin{enumerate}

%\item
% For the \emph{coordinate arrangement} $\mathcal{H}_n$ consisting of the $n$ coordinate hyperplanes in ${\mathbbm{k}}^n$,
%\[
%\overline{\chi}({\mathcal{H}_n};X,Y) = (X+Y-1)^n.
%\]

\item \cite{f.BrylawskiOxley}
If the characteristic of $\mathbbm{k}$ is $0$, any \emph{sufficiently generic} \index{hyperplane arrangement!generic} central arrangement in $\mathbbm{k}^d$ is such that the intersection of any $m$ hyperplanes has codimension $m$ for $1 \leq m \leq d$. If we let ${\mathcal{A}}_{n,d}$ be such a \emph{generic arrangement} of $n$ hyperplanes in $\mathbbm{k}^d$, then 
\[
T({{\mathcal{A}}_{n,d}};x,y) = \sum_{i=1}^d{n - i - 1 \choose n-d-1} x^i +  \sum_{j=1}^{n-d} {n-j-1 \choose d-1} y^j.
\]

\item \cite{f.BrylawskiOxley}
A graph $G$ on $n$ vertices gives rise to the \emph{graphical arrangement} \index{hyperplane arrangement!graphical} ${\mathcal{A}}_{G}$ in ${\mathbbm{k}}^n$ which has a hyperplane $x_i = x_j$ for every edge $ij$ of $G$. If ${\mathbbm{k}} = \mathbb{R}$, the regions of ${\mathcal{A}}_G$ are in bijection with  the orientations of the edges of $G$ that form no directed cycles.

By the finite field method, the characteristic polynomial $\chi({{\mathcal{A}}_G;q)}$ is equal to the \emph{chromatic polynomial} \index{chromatic polynomial} $\chi(G; q)$, which counts the vertex colorings of $G$ with $q$ colors such that no edge joins two vertices of the same color. This gives a proof that $\chi(G;q)$ is indeed polynomial in $q$. 
Similarly, 
\[
q^{n-r} \overline{\chi}({{\mathcal{A}}_G}; q,t) =
\sum_{f: [n] \rightarrow [q]} t^{h(f)} 
\]
where we sum over all vertex colorings $f$ of $G$ with $q$ colors, and $h(f)$ is the number of edges of $G$ whose ends have the same color in $f$.

An important special case is the graphical arrangement for the complete graph $K_n$, consisting of the ${n \choose 2}$ hyperplanes in $\mathbb{R}^n$ given by equations $x_i=x_j$ for $1 \leq i < j \leq n$. This is known as the \emph{braid arrangement} \index{hyperplane arrangement!braid} or the \emph{type $A$ Coxeter arrangement}, and we now discuss it further.

\item
 \cite{f.ArdilaTutte, f.Tuttedichromatic} Root systems are arguably the most important vector configurations; these highly symmetric arrangements are fundamental in many branches of mathematics. For the definition and properties, see for example \cite{f.Humphreys}; we focus on the four infinite families of \emph{classical root systems}: \index{root system}
\begin{eqnarray*}
A_{n-1} &=& \{e_i-e_j,\, : \, 1\leq i < j\leq n\} \\
B_n &=& \{e_i -  e_j, e_i + e_j \, : \,  1\leq i <  j\leq n\} \cup \{e_i \, : \, 1 \leq i \leq n\} \\
C_n &=& \{e_i -  e_j, e_i + e_j \, : \,  1\leq i <  j\leq n\} \cup \{2e_i \, : \, 1 \leq i \leq n\} \\
D_n &=& \{e_i -  e_j, e_i + e_j \, : \,  1\leq i <  j\leq n\} 
\end{eqnarray*}
regarded as linear functionals  in $(\mathbbm{k}^n)^*$, where $e_1, \ldots, e_n$ is the standard basis. Figure \ref{f.fig:rootsystems} illustrates the two-dimensional examples. Aside from the infinite families, there are five exceptional root systems: $E_6, E_7, E_8, F_4, G_2$.

\begin{figure}[ht]
 \begin{center}
  \includegraphics[scale=.4]{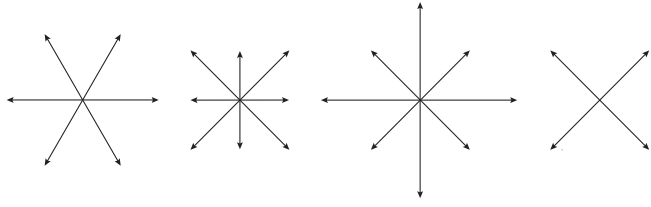}
  \caption{ \label{f.fig:rootsystems}
The root systems $A_2, B_2, C_2,$ and $D_2$, respectively.} \end{center}
\end{figure}

The classical root systems lead to the \emph{Coxeter arrangements} \index{hyperplane arrangement!Coxeter} ${\mathcal{A}}_{n-1}, \mathcal{BC}_n,$ and $\mathcal{D}_n$ of hyperplanes determined by the roots. For example, the Coxeter arrangement ${\mathcal{A}}_{n-1}$ is the braid arrangement in $\mathbb{R}^n$. Note that $B_n$ and $C_n$  lead to the same arrangement $\mathcal{BC}_n$.

The characteristic polynomials of Coxeter arrangements are very elegant:
\begin{eqnarray*}
\overline{\chi}({A_{n-1}};q) &=& q(q-1)(q-2) \cdots (q-n+1),\\
\overline{\chi}({BC_n};q) &=& (q-1)(q-3)\cdots (q-2n+3)(q-2n+1),\\
\overline{\chi}({D_n};q) &=& (q-1)(q-3)\cdots (q-2n+3)(q-n+1).
\end{eqnarray*}
Similar expressions hold for the exceptional root systems. There are several conceptual explanations for the factorization of these polynomials into linear forms; see \cite[Section 1.7.4]{f.Ardilasurvey} or  \cite{f.Saganfactors} for references.
In view of Theorem \ref{f.th:charpoly}.1, when $\mathbbm{k} = \mathbb{R}$ these formulas lead to 
\[
a(A_{n-1}) = n!, \qquad 
a(BC_n) = 2^n n!, \qquad 
a(D_n) = 2^{n-1}n!,
\]
consistent with the general fact that the regions of a Coxeter arrangement are in bijection with the elements of the corresponding Coxeter group.

To compute the Tutte polynomials of the classical Coxeter arrangements, let the \emph{deformed exponential function} be 
%$F(\alpha, \beta) = \sum_{n \geq 0} \frac{\alpha^n \, \beta^{n \choose 2}}{n!}$. 
$F(\alpha, \beta) = \sum_{n \geq 0} {\alpha^n \, \beta^{n \choose 2}}/{n!}$. 
The \emph{Tutte generating functions} of $A$ and $\Phi \in \{B,C,D\}$, defined as
\begin{eqnarray*}
{T}_A(X,Y,Z) &=& 1+X \sum_{n \geq 1} \overline{\chi}({A_{n-1}};X,Y) \frac{Z^n}{n!}, \\
{T}_\Phi(X,Y,Z) &=& \sum_{n \geq 0} \overline{\chi}({\Phi_n};X,Y) \frac{Z^n}{n!},
\end{eqnarray*}
are given by
\begin{eqnarray*}
T_A(X,Y,Z) &=& F(Z,Y)^X,\\
T_{BC}(X,Y,Z) &=& F(2Z,Y)^{(X-1)/2}F(YZ,Y^2),\\
%T_C &=& F(2Z,Y)^{(X-1)/2}F(YZ,Y^2),\\
T_D(X,Y,Z) &=& F(2Z,Y)^{(X-1)/2}F(Z,Y^2).
\end{eqnarray*}
Here we are following the convention that if $C$ and $A = 1+B$ are formal power series such that the constant coefficient of $A$ is $1$, then $A^C := e^{C \log(1+B)}$. This is well defined because the formal power series for $e^D$ and $\log(1+D)$ are well defined for any $D$ with constant coefficient equal to $0$.
The Tutte polynomials of the exceptional root systems are computed in \cite{f.DeConciniProcesi.Tutte, f.DeConciniProcesibook}.

\item \cite{f.MartinReiner} The formula above for the root systems of type $A$ gives the Tutte polynomials of the complete graphs; they are due to Tutte \cite{f.Tuttedichromatic}.
The coboundary polynomials of the \emph{complete bipartite graphs} $K_{m,n}$ are given by
\[
1 + X \sum_{{m,n \geq 0} \atop {(m,n) \neq (0,0)}} \overline{\chi}({K_{m,n}};X,Y) \frac{Z_1^m}{m!}\frac{Z_2^n}{n!} = \left(\sum_{m, n \geq 0} Y^{mn} \frac{Z_1^m}{m!} \frac{Z_2^n}{n!}\right)^X.
\]

\item
 \cite{f.BaranyReiner, f.Mphako}.
 Let $p$ be a prime power and consider 
the arrangement ${\mathcal{A}}(p,n)$ of \textbf{all} linear hyperplanes in ${\mathbb{F}}_p^n$. The characteristic polynomial is
\[
\chi({{\mathcal{A}}(p,n)};q) = (q-1)(q-p)(q-p^2) \cdots (q-p^{n-1}).
\]
The \emph{$p$-exponential generating function} of the coboundary polynomials is
\[
\sum_{n \geq 0} \overline{\chi}({{\mathcal{A}}(p,n)};X,Y) \frac{u^n}{(p;p)_n}
= 
\frac{(u;p)_\infty}{(Xu;p)_\infty}
\sum_{n \geq 0} Y^{1+p+\cdots + p^{n-1}} \frac{u^n}{(p;p)_n},
\]
where we define $(a;p)_n = (1-a)(1-pa) \cdots (1-p^{n-1}a)$ for $n \in \mathbb{N}$ and $(a;p)_\infty = (1-a)(1-pa)(1-p^2a)\cdots$.

\item
 \cite{f.ArdilaTutte} The \emph{threshold arrangement}  \index{hyperplane arrangement!threshold}  $\mathcal{T}_n$ in $\mathbbm{k}^n$ consists of the ${n \choose 2}$ hyperplanes $x_i + x_j = 0$ for $1 \leq i < j \leq n$. We have
\[
\sum_{n \geq 0} \overline{\chi}({\mathcal{T}_n};X,Y) \frac{Z^n}{n!} = 
\left(\sum_{r, s \geq 0} \frac{Y^{rs} Z^{r+s}}{r!s!}\right)^{(X-1)/2}  \left(\sum_{n \geq 0} \frac{Y^{n \choose 2} Z^n}{n!}\right).
\]
When $\mathbbm{k}=\mathbb{R}$, the regions of $\mathcal{T}_n$ are in bijection with the \emph{threshold graphs} \index{graph!threshold} on $[n]$. These are the graphs for which there exist vertex weights $w(i)$ for $1 \leq i \leq n$ and a \emph{threshold} $w$ such that edge $ij$ is present in the graph if and only if $w(i) + w(j) > w.$ They have many interesting properties and applications; see \cite{f.MahadevPeled}.

\item \cite{f.BrylawskiOxley}
 If ${\mathcal{A}}^{(k)}$ is the arrangement obtained from ${\mathcal{A}}$ by replacing each  hyperplane by $k$ copies of itself, then
%parallel elements, and replacing each loop by $k$ loops. Then
\[
T({{\mathcal{A}}^{(k)}};x,y) = (y^{k-1} + \cdots + y^2 + y+1)^r T\left({\mathcal{A}};\frac{y^{k-1} + \cdots +y^2 + y+x}{y^{k-1} + \cdots + y^2 + y+1}, y^k\right).
\]
For arrangements with integer coefficients, this formula follows readily from the finite field method: notice that a point $p$ which is on $m$ hyperplanes of ${\mathcal{A}}$ is on $km$ hyperplanes of ${\mathcal{A}}^{(k)}$, and this implies that $\overline{\chi}({{\mathcal{A}}^{(k)}};X,Y) = \overline{\chi}({{\mathcal{A}}};X,Y^k)$. For a generalization, see Theorem \ref{f.th:multi}.

%
%
%
%
%
%$\bullet$
% \comment{Pongo esto?}
%(Generic arrangement) ${\mathcal{A}}_{n,r}$: \quad $n$ generic hyperplanes in ${\mathbbm{k}}^r$.
%
%Consider a \textbf{generic} arrangement of $n$ hyperplanes in ${\mathbbm{k}}^r$, where any $k \leq r$ hyperplanes have an intersection of codimension $k$. There are ${n \choose m}$ flats of rank $m$, and for each flat $F$ we have $[\widehat{0}, F] \cong 2^{[m]}$, so $\mu(\widehat{0},F) = (-1)^m$. Therefore the characteristic polynomial is
%\[
%\chi_{{\mathcal{A}}_{n,r}} (x) = \sum_{m=0}^r (-1)^m {n \choose m} x^m
%\]
%and the number of regions and bounded regions are
%\begin{eqnarray*}
%a({\mathcal{A}}_{n,r}) &=& {n \choose r} + {n \choose r-1} + \cdots + {n \choose 0}, \\
%b({\mathcal{A}}_{n,r}) &=& {n \choose r} - {n \choose r-1} + \cdots \pm {n \choose 0} = {n-1 \choose r}.
%%
%%\[
%%a({\mathcal{A}}_{n,r}) = {n \choose r} + {n \choose r-1} + \cdots + {n \choose 0}, \quad 
%%b({\mathcal{A}}_{n,r}) = {n \choose r} - {n \choose r-1} + \cdots \pm {n \choose 0} = {n-1 \choose r}.
%%\]
%\end{eqnarray*}
%
%

\item
 \cite{f.ArdilaTutte, f.PostnikovStanley}
There are many interesting \emph{deformations of the braid arrangement}, obtained by considering hyperplanes of the form $x_i-x_j=a$ for various constants $a$. 
Two particularly elegant ones are the \emph{Catalan} \index{hyperplane arrangement!Catalan} and \emph{Shi} \index{hyperplane arrangement!Shi} arrangements:
\begin{eqnarray*}
\mathrm{Cat}_{n-1} &:& x_i - x_j \in \{-1, 0, 1\}  \qquad (1 \leq i < j \leq n) \\
\mathrm{Shi}_{n-1} &:& x_i - x_j \in \{0, 1\}  \qquad (1 \leq i < j \leq n) 
\end{eqnarray*}
The left panel of Figure \ref{f.fig:arrangements} shows the arrangement ${\mathcal{A}}_2$ consisting of the  planes $x_1=x_2,\,  x_2=x_3,$ and $x_1=x_3$ in ${\mathbb{R}}^3$. Since all planes contain the line $x_1=x_2=x_3$, we quotient by it, obtaining a two-dimensional picture. The other panels show the Catalan and Shi arrangements.

\begin{figure}[ht]
 \begin{center}
  \includegraphics[scale=.8]{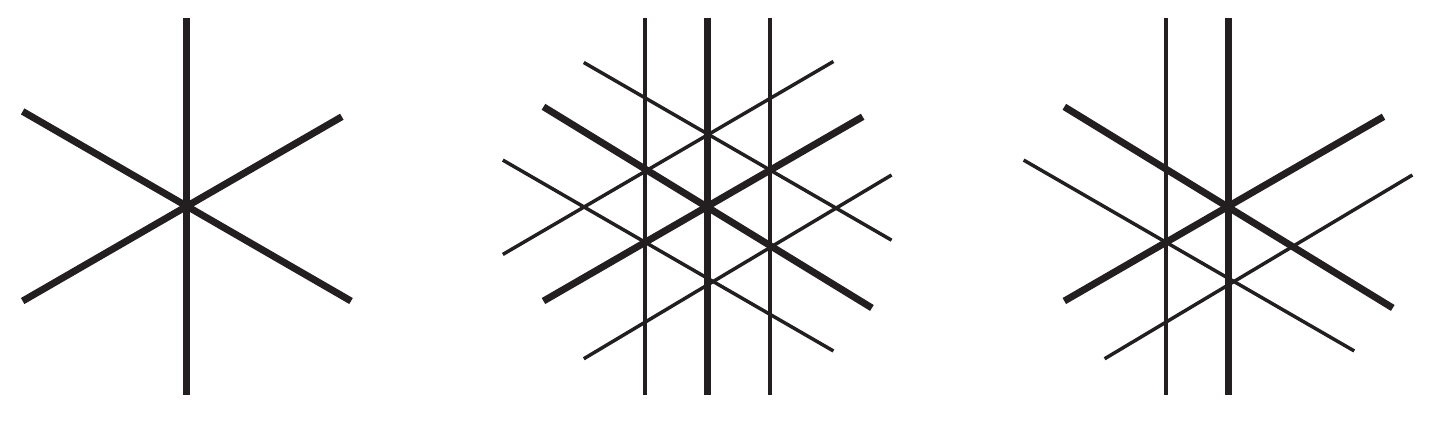}
  \caption{ \label{f.fig:arrangements}
  The braid, Catalan, and Shi arrangements ${\mathcal{A}}_{2}, \mathrm{Cat}_2$, and $\mathrm{Shi}_2$.}
  \end{center}
\end{figure}

When $\mathbbm{k} = \mathbb{R}$, we have the simple formulas
\begin{align*}
 a(\mathrm{Cat}_{n-1}) &=  n!C_n
& a(\mathrm{Shi}_{n-1}) =  (n+1)^{n-1}
\\
 b(\mathrm{Cat}_{n-1}) &=  n!C_{n-1}
& b(\mathrm{Shi}_{n-1})=  (n-1)^{n-1} 
\end{align*}
where $C_n=\frac1{n+1}{2n \choose n}$ is the $n$-th \emph{Catalan number}\index{Catalan numbers}, which famously has hundreds of different combinatorial interpretations \cite{f.StanleyCatalan}. The number $(n+1)^{n-1}$ also has many combinatorial interpretations of interest; \emph{parking functions} \index{parking function} are particuarly relevant  \cite{f.EC2}. We have
\begin{eqnarray*}
\chi({\mathrm{Cat}_{n-1}};q) &=&  q(q-n-1)(q-n-2)\cdots (q-2n+1), \\
\chi({\mathrm{Shi}_{n-1}};q) &=&  q(q-n)^{n-1}.
\end{eqnarray*}
There are substantially more complicated formulas for the Tutte polynomials of the Catalan and Shi arrangements \cite{f.ArdilaTutte}; it is not known whether they can be used to compute these polynomials efficiently.

\end{enumerate}

\section{{{Multivariate and Arithmetic Tutte Polynomials}}} \label{f.sec:variants}

We now discuss two useful variants of the Tutte polynomial.

\subsection{The multivariate Tutte polynomial}

The first variant is a refinement of the ordinary Tutte polynomial which is inspired by statistical mechanics. 

\begin{definition} \label{f.d:multivariateTutte} \cite{f.ArdilaTutte, f.Sokal}
The \emph{multivariate Tutte polynomial} \index{hyperplane arrangement!multivariate Tutte polynomial} of a hyperplane arrangement ${\mathcal{A}}$  is 
\[
\widetilde{Z}({\mathcal{A}};q, \mathbf{w}) = \sum_{
{{\mathcal{B}} \subseteq {\mathcal{A}}}\atop{{\mathcal{B}} \textrm{ central}}
}
q^{-r({\mathcal{B}})} \prod_{e \in {\mathcal{B}}} w_e 
\]
where $q$ and $(w_e)_{e \in {\mathcal{B}}}$ are indeterminates. 
\end{definition}

When ${\mathcal{A}}={\mathcal{A}}_G$ is a graphical arrangement, $\widetilde{Z}_{\mathcal{A}}(q; \mathbf{w})$ is equal to the partition function of the $q$-state Potts model on $G$; see also
% Chapter ??? in this handbook and 
\cite{f.Sokal}. Note that if we set $w_e=w$ for all $e$ in $\mathcal{A}$, then we have $\widetilde{Z}_{\mathcal{A}}(q,\mathbf{w}) = (w/q)^rT({\mathcal{A}};\frac{q}{w}+1, w+1)$, which is simply a transformation of the Tutte polynomial. 

\begin{theorem}\label{f.th:multi} \cite{f.ArdilaPostnikov}  For a vector ${\mathbf{a}} \in {\mathbb{N}}^n$, let ${\mathcal{A}}({\mathbf{a}})$ be the arrangement ${\mathcal{A}}$ where each hyperplane $e$ is replaced by $a_e$ copies of $e$. 
\begin{enumerate}
\item
The Tutte polynomial of  ${\mathcal{A}}({\mathbf{a}})$ is
\[
T({{\mathcal{A}}({\mathbf{a}})};x,y) = (x-1)^{r({\mathrm{supp }}({\mathbf{a}}))} \widetilde{Z}\left({\mathcal{A}}; (x-1)(y-1), y^{a_1-1}, \ldots y^{a_n-1}\right).
\]
\item
The generating function for 
%We can also encode 
the Tutte polynomials of \textbf{all} the arrangements ${\mathcal{A}}({\mathbf{a}})$ is essentially equivalent to the multivariate Tutte polynomial:
\begin{align*}
&\sum_{{\mathbf{a}} \in {\mathbb{N}}^n} \frac{T({{\mathcal{A}}({\mathbf{a}})};x,y)}{(x-1)^{r({\mathrm{supp }}({\mathbf{a}}))}} w_1^{a_1}\cdots w_n^{a_n}
= \\
& = \frac{1}{\prod_{i=1}^n(1-w_i)} \widetilde{Z}\left({\mathcal{A}}; (x-1)(y-1); \frac{(y-1)w_1}{1-yw_1}, \ldots,  \frac{(y-1)w_n}{1-yw_n}\right).
\end{align*}
Here ${\mathrm{supp }}({\mathbf{a}})$ denotes the set of hyperplanes $e$ for which $a_e > 0$. 

%
%\[
%\sum_{{\mathbf{a}} \in {\mathbb{N}}^n} \frac{T({{\mathcal{A}}({\mathbf{a}})};x,y)}{(x-1)^{r({\mathrm{supp }}({\mathbf{a}}))}} w_1^{a_1}\cdots w_n^{a_n}
%=
%\frac{1}{\prod_{i=1}^n(1-w_i)} \widetilde{Z}\left({\mathcal{A}}; (x-1)(y-1); \frac{(y-1)w_1}{1-yw_1}, \ldots,  \frac{(y-1)w_n}{1-yw_n}\right)
%\]
\end{enumerate}
\end{theorem}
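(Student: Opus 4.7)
The plan is a direct computation: I expand $T(\mathcal{A}(\mathbf{a}); x, y)$ with its corank–nullity definition, reorganize the resulting sum to factor out the local contributions coming from the multiplicities, and recognize the multivariate Tutte polynomial. The same strategy, combined with a generating-function identity, will handle Part 2.

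For Part 1, I start with
\[
T(\mathcal{A}(\mathbf{a}); x, y) = \sum_{\mathcal{B}'} (x-1)^{r^* - r(\mathcal{B}')} (y-1)^{|\mathcal{B}'| - r(\mathcal{B}')},
\]
where $r^* = r(\mathrm{supp}(\mathbf{a}))$ and the sum is over central submultisets $\mathcal{B}'$ of $\mathcal{A}(\mathbf{a})$. Since duplicating a hyperplane does not change any intersection, specifying such a $\mathcal{B}'$ is the same as specifying (i) its support $\mathcal{B} \subseteq \mathrm{supp}(\mathbf{a})$, which must be central in $\mathcal{A}$, and (ii) for each $e \in \mathcal{B}$, a nonempty subset $S_e \subseteq [a_e]$ of copies. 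Moreover $r(\mathcal{B}') = r(\mathcal{B})$ and $|\mathcal{B}'| = \sum_{e \in \mathcal{B}} |S_e|$. Grouping the sum by $\mathcal{B}$, the inner sum factors over $e \in \mathcal{B}$, and the binomial identity $\sum_{\emptyset \neq S \subseteq [a_e]} (y-1)^{|S|} = y^{a_e} - 1$ collapses each local factor. After pulling out $(x-1)^{r^*}$, the remainder is precisely the definition of $\widetilde{Z}(\mathcal{A}; (x-1)(y-1), y^{a_1}-1, \ldots, y^{a_n}-1)$, in agreement with the special case $\mathbf{a} = k\mathbf{1}$ from item 7 of Section 6.

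For Part 2, I substitute Part 1 into the left-hand side and exchange the two sums, moving $\sum_{\mathcal{B} \text{ central}}$ outside and $\sum_{\mathbf{a} \in \mathbb{N}^n}$ inside. The inner sum factors over the indices $i$: for $i \notin \mathcal{B}$ it is the geometric series $\sum_{a_i \geq 0} w_i^{a_i} = 1/(1-w_i)$, while for $i \in \mathcal{B}$ it is
\[
\sum_{a_i \geq 0}(y^{a_i} - 1)\, w_i^{a_i} \;=\; \frac{1}{1 - yw_i} - \frac{1}{1 - w_i} \;=\; \frac{(y-1)w_i}{(1 - yw_i)(1 - w_i)}.
\]
Factoring the common $\prod_i 1/(1-w_i)$ out of the remaining sum over $\mathcal{B}$ recovers $\widetilde{Z}(\mathcal{A};\, (x-1)(y-1),\, (y-1)w_1/(1-yw_1),\, \ldots,\, (y-1)w_n/(1-yw_n))$, as claimed.

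The only real obstacle is the bookkeeping in Part 1: one must carefully parametrize central submultisets of $\mathcal{A}(\mathbf{a})$ and verify that $r(\mathcal{B}') = r(\mathcal{B})$. This rank equality is immediate because copies of the same hyperplane produce identical flats, so $\mathcal{A}(\mathbf{a})$ and its support arrangement share an intersection lattice. No convergence issues arise in Part 2, since each monomial $w_1^{a_1}\cdots w_n^{a_n}$ has only finitely many contributors on either side of the identity.
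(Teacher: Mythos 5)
Your proof is correct and complete. This survey does not include a proof of the theorem (it cites the original reference), so there is no internal argument to compare against; your computation is the natural one and presumably matches the cited source. You parametrize a central subset $\mathcal{B}'$ of $\mathcal{A}(\mathbf{a})$ by its support $\mathcal{B}$ (central in $\mathcal{A}$) together with a nonempty set of copies $S_e \subseteq [a_e]$ for each $e \in \mathcal{B}$, use $r(\mathcal{B}')=r(\mathcal{B})$ to pull the rank-dependent factors out, collapse the local sums by $\sum_{\emptyset \neq S \subseteq [a_e]}(y-1)^{|S|} = y^{a_e}-1$, and identify the remainder with $\widetilde{Z}$. Part 2 then follows by exchanging the two sums and evaluating the geometric series $\sum_{a\geq 0} w^a = 1/(1-w)$ and $\sum_{a\geq 0}(y^a-1)w^a = (y-1)w/\bigl((1-w)(1-yw)\bigr)$; factoring out $\prod_i 1/(1-w_i)$ gives the stated identity. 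One remark: the theorem as typeset reads $y^{a_1-1},\ldots,y^{a_n-1}$, which should be $y^{a_1}-1,\ldots,y^{a_n}-1$ (a misplaced brace in the source); your derivation produces the correct substitution, and your consistency check against item 7 of Section 6 (the case $\mathbf{a}=k\mathbf{1}$, i.e.\ $\mathcal{A}^{(k)}$) confirms it.
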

There is also an algebraic manifestation of the multivariate Tutte polynomial: the multigraded Hilbert series of the \emph{zonotopal Cox ring} \index{hyperplane arrangement!zonotopal Cox ring} of ${\mathcal{A}}$ is an evaluation of the multivariate Tutte polynomial of ${\mathcal{A}}$  \cite{f.ArdilaPostnikov, f.SturmfelsXu}.

\subsection{The arithmetic Tutte polynomial} 

The second variant takes arithmetic into account, and is defined for vector arrangements, or for arrangements of subtori of codimension $1$ inside a torus.

\begin{definition}
\label{f.d:arithmeticTutte}
For a collection $A \subseteq {\mathbb{Z}}^d$ of integer vectors, the \emph{arithmetic Tutte polynomial} \index{arithmetic Tutte polynomial}
 is
\[
M({A};x,y)= \sum_{B \subseteq {A}} m(B)(x-1)^{r-r(B)}(y-1)^{|B|-r(B)}
\]
where, for each $B \subseteq A$, the \emph{multiplicity} $m(B)$ is the index of ${\mathbb{Z}} B$ as a sublattice of $({\mathrm{span }} \, B) \cap {\mathbb{Z}}^d$. If we use the vectors in $B$ as the  columns of a matrix, then $m(B)$ equals the greatest common divisor of the minors of full rank. The \emph{arithmetic characteristic polynomial} of $A$ is $(-1)^rq^{d-r}M(1-q,0)$. \index{arithmetic characteristic polynomial}
\end{definition}

There is also a multivariate arithmetic Tutte polynomial\index{arithmetic Tutte polynomial!multivariate}; see \cite{f.BrandenMoci}.

The next theorem shows that $M(A;x,y)$ encodes information about the zonotope of $A$; see also 
%Chapter ??? in this Handbook and 
\cite{f.D'AdderioMoci.Ehrhart, f.DeConciniProcesibook, f.Stanleyzonotope}.

\begin{theorem}
Let $A \subseteq {\mathbb{Z}}^d$ be a set of integer vectors and let the \emph{zonotope} of $A$ be the Minkowski sum of the vectors in $A$; that is,
\[
Z(A) = \left\{\sum_{a \in A} \lambda_a a \, : \, 0 \leq \lambda_a \leq 1 \textrm{ for } a \in A\right\}.
\]
\begin{enumerate}
\item
The volume of the zonotope $Z(A)$ is $M(A;1,1)$.
\item
The zonotope $Z(A)$ contains $M(A;2,1)$ lattice points, $M(A;0,1)$ of which are in its interior. 
\item
The Ehrhart polynomial of the zonotope $Z(A)$, which counts the lattice points in the dilation $qZ(A)$ for $q \in \mathbb{N}$, equals $q^rM(A;1+\frac1q,1)$. 
\end{enumerate}
\end{theorem}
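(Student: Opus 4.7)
The plan is to derive all three parts from the classical \emph{half-open parallelepiped decomposition} of $Z(A)$, combined with the arithmetic input that each parallelepiped contains $m(B)$ lattice points. First I would fix a linear order on $A$ and, for each subset $B \subseteq A$, write $\Pi_B = \{\sum_{b \in B} \lambda_b b : 0 \leq \lambda_b < 1\}$ for the half-open parallelepiped on $B$. The key combinatorial fact (due to Shephard, in the spirit of the basis-activities decomposition) is that one can choose integer translation vectors $v_B$ so that
\[
Z(A) \;=\; \bigsqcup_{B \subseteq A \text{ independent}} \bigl(v_B + \Pi_B\bigr)
\]
is a disjoint union parametrized by the \emph{independent} subsets of $A$, not merely the bases.

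For Part 1, the definition of $M$ collapses at $(x,y)=(1,1)$ to $M(A;1,1) = \sum_{B \text{ basis}} m(B)$, killing terms with $r(B)<r$ or $|B|>r(B)$. Each basis contributes a full-dimensional parallelepiped of $r$-dimensional (relative) volume $m(B)$, and these top-dimensional pieces partition $Z(A)$ up to a measure-zero boundary, so their volumes sum to $\operatorname{vol}(Z(A))$. For Part 2 we have $M(A;2,1) = \sum_{B \text{ indep.}} m(B)$, and the key step is the index formula: each $v_B + \Pi_B$ contains exactly $m(B)$ lattice points of $\mathbb{Z}^d$, because $\Pi_B$ is a fundamental domain for $\mathbb{Z} B$ inside the saturation $L_B := \operatorname{span}(B) \cap \mathbb{Z}^d$, and Smith normal form gives $[L_B : \mathbb{Z} B] = \gcd\bigl(\text{maximal minors of } B\bigr) = m(B)$. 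Summing over the disjoint decomposition yields $|Z(A) \cap \mathbb{Z}^d| = M(A;2,1)$.

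For Part 3, apply Part 2 to the scaled arrangement $qA = \{qa : a \in A\}$. Since $qZ(A) = Z(qA)$ and $m(qB) = q^{r(B)} m(B)$ (each maximal minor of $B$ of size $r(B)$ scales by $q^{r(B)}$), we compute
\[
|qZ(A) \cap \mathbb{Z}^d| \;=\; \sum_{B \text{ indep.}} q^{r(B)} m(B) \;=\; q^r \sum_{B \text{ indep.}} m(B)\, q^{-(r-r(B))} \;=\; q^r M\!\left(A; 1+\tfrac{1}{q}, 1\right),
\]
which is the claimed Ehrhart formula. The interior lattice count in Part 2 follows from Ehrhart--Macdonald reciprocity: $Z(A)$ is a lattice polytope (its vertices are integer sums of subsets of $A$), so its interior lattice points are counted by $(-1)^r L_{Z(A)}(-1) = (-1)^r(-1)^r M(A;0,1) = M(A;0,1)$.

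The main obstacle is the half-open decomposition itself: producing integer translations $v_B$ so that the pieces $v_B + \Pi_B$, indexed by \emph{all} independent subsets rather than only the bases, tile $Z(A)$ disjointly. This is a generalization of the usual active-elements construction from the basis-activities theorem cited earlier, and can alternatively be established by induction on $|A|$ by slicing along the last hyperplane in the chosen order. Once the decomposition is in hand, the arithmetic step $[L_B : \mathbb{Z} B] = m(B)$ is a clean consequence of Smith normal form, and the three assertions of the theorem follow uniformly.
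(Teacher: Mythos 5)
The paper states this theorem without proof, deferring to the cited references (in particular Stanley's zonotope paper and D'Adderio--Moci), and your proposal correctly reconstructs the standard argument used there: the Shephard--Stanley half-open parallelepiped decomposition of $Z(A)$ indexed by independent subsets, the Smith normal form computation giving $m(B)$ lattice points per piece, the scaling identity $m(qB) = q^{r(B)}m(B)$ for the Ehrhart formula, and Ehrhart--Macdonald reciprocity for the interior count. Your identification of the half-open decomposition as the one nontrivial input is exactly right, and the inductive construction you sketch (slicing off the last generator in a fixed linear order) is the classical way to establish it.
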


The artihmetic Tutte polynomial is also intimately related to the geometry of toric arrangements, as follows.

Let the torus $T=\mathrm{Hom}({\mathbb{Z}}^d,G)$ be the group of homomorphisms from ${\mathbb{Z}}^d$ to a multiplicative group $G$, such as the unit circle $\mathbb{S}^1$ or ${{\mathbbm{k}}}^*={{\mathbbm{k}}}\backslash \{0\}$ for a field ${{\mathbbm{k}}}$. 
%We might also consider the unitary characters $T=\Hom(\Lambda,\SS^1)$ where $\SS^1$ is the unit circle in ${\mathbb{C}}$. It is easy to check that $T$ is isomorphic to ${{\mathbbm{k}}}^*$ and to $\SS^1$, respectively. 
The collection $A$ determines a \emph{toric arrangement} \index{toric arrangement} in $T$, consisting of the codimension 1 subtori
\[
T_a = \{t \in T \, : \, t(a) = 1\} \subset T
\]
for each vector $a \in A$.
For instance $a=(2,-3,5)$ gives the torus $x^2y^{-3}z^5=1$. 

The following results are the toric analogs of Theorems \ref{f.th:charpoly} and \ref{f.th:Tuttefinitefield} about hyperplane arrangements.

\begin{theorem}\label{f.t:hypertoric}
Let $A \subset {\mathbb{Z}}^d$  and $T=\mathrm{Hom}({\mathbb{Z}}^d,G)$ for a  group $G$. Consider the \emph{toric arrangement} of $A$ and its \emph{complement}\index{toric arrangement!complement}, namely,
\[
{\mathcal{T}}(A) = \{T_a \, : \, a \in A\}, \qquad R(A) = T \, \setminus \bigcup_{a \in {\mathcal{T}}(A)} T_a.
\]

\begin{enumerate}
\item \cite{f.EhrenborgReaddySlone, f.Moci.toric} If $G = \mathbb{S}^1$, the number of regions of $R({{A}})$ in the torus $(\mathbb{S}^1)^d$ is equal to  $M(A;1,0)$.

\item \cite{f.DeConciniProcesi.toric, f.DeConciniProcesibook, f.Moci.toric} If $G={\mathbb{C}}^*$, the Poincar\'e polynomial of $R({{A}})$ is equal to  $q^rM(A;2+\frac1q,0)$.

\item \cite{ f.ArdilaCastilloHenley, f.BrandenMoci} (Finite Field Method) If $G={\mathbb{F}}_{q+1}^*$ where $q+1$ is a prime power, then the number of elements of $R({{A}})$ is $(-1)^rq^{d-r}M(A;1-q,0)$, the \emph{arithmetic characteristic polynomial}. Furthermore,
\[
\sum_{p \in ({\mathbb{F}}^*_{q+1})^d} t^{h(p)} = (t-1)^r q^{d-r} M\left(A; \frac{q+t-1}{t-1}, t \right),
\]
where $h(p)$ is the number of hypertori of ${\mathcal{T}}(A)$ that $p$ lies on.
\end{enumerate}
\end{theorem}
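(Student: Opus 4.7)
The strategy is to prove Part 3 (the Finite Field Method) directly, since it is the most combinatorial, and then observe that Parts 1 and 2 follow by combining its formula for the arithmetic characteristic polynomial with known toric analogs of the Zaslavsky and Orlik--Solomon theorems.

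The core of Part 3 is a counting lemma: when $q$ is divisible by every multiplicity $m(B)$ appearing in $M(A;x,y)$, each intersection subtorus $T_B = \bigcap_{a \in B} T_a$ satisfies $|T_B| = m(B)\, q^{d-r(B)}$. I would prove this by identifying $T_B = \mathrm{Hom}(\mathbb{Z}^d/\mathbb{Z}B,\, G)$, then using the Smith normal form to write $\mathbb{Z}^d/\mathbb{Z}B \cong \mathbb{Z}^{d-r(B)} \oplus F$ with $|F| = m(B)$. The free summand contributes $q^{d-r(B)}$, and since $G$ is cyclic of order $q$, once every invariant factor of $F$ divides $q$ the torsion summand contributes the full $m(B)$.

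Granting the lemma, the main computation mirrors the hyperplane proof of Theorem \ref{f.th:Tuttefinitefield}. Expanding $t^{h(p)} = \prod_{a \in A}\bigl(1 + (t-1)\mathbf{1}_{T_a}(p)\bigr)$ and exchanging the two sums gives
\begin{equation*}
\sum_{p \in T} t^{h(p)} = \sum_{B \subseteq A} (t-1)^{|B|}\, |T_B| = \sum_{B \subseteq A} m(B)\, q^{d-r(B)}\, (t-1)^{|B|}.
\end{equation*}
Applying the change of variables $x = \tfrac{q+t-1}{t-1}$, $y = t$ (so $x - 1 = \tfrac{q}{t-1}$ and $y-1 = t-1$) in the definition of $M(A;x,y)$ shows that this equals $(t-1)^r q^{d-r} M\bigl(A;\tfrac{q+t-1}{t-1},\, t\bigr)$, proving the second identity in Part 3. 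Specializing to $t = 0$ kills every term in which $p$ lies on some subtorus, so the left side collapses to $|R(A)|$ and the right side becomes $(-1)^r q^{d-r} M(A;1-q,0)$.

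The main obstacle is that the divisibility hypothesis $m(B) \mid q$ is not satisfied by every prime power $q+1$. I would handle this exactly as in the rational hyperplane case: both sides of the identity are polynomial in $q$ and $t$, and infinitely many prime powers $q+1$ meet the divisibility condition (for instance, primes $q+1$ congruent to $1$ modulo the lcm of the multiplicities), so the identity extends from those $q$ to $q$ as a formal variable. Parts 1 and 2 then follow from the same arithmetic bookkeeping. For Part 1, over $G = \mathbb{S}^1$ each intersection $T_B$ decomposes into exactly $m(B)$ connected subtori of dimension $d-r(B)$, and a Zaslavsky-type count of regions via the Euler characteristic of the induced stratification of $(\mathbb{S}^1)^d$ yields $M(A;1,0)$. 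For Part 2, over $G = \mathbb{C}^*$ the De Concini--Procesi model for the toric complement expresses the Poincar\'e polynomial as a refined M\"obius sum on the same arithmetic intersection data, matching $q^r M(A;2+\tfrac{1}{q},0)$.
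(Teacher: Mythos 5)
The paper itself gives no proof of this theorem --- it is a survey statement that defers entirely to the cited references --- so there is no in-paper argument to compare against, and your proposal has to be judged on its own. Your proof of Part 3 is correct, and it is the same finite field argument found in \cite{f.ArdilaCastilloHenley, f.BrandenMoci}: identify $T_B \cong \mathrm{Hom}(\mathbb{Z}^d/\mathbb{Z}B, G)$, apply Smith normal form to get $|T_B| = q^{d-r(B)}\prod_i \gcd(d_i,q)$, observe this equals $m(B)\,q^{d-r(B)}$ precisely when each invariant factor $d_i$ divides $q$ (which, since $d_i \mid m(B)$, follows from $m(B)\mid q$), expand $t^{h(p)} = \prod_a\bigl(1+(t-1)\mathbf{1}_{T_a}(p)\bigr)$, swap sums, and match against the definition of $M$ via $x-1 = q/(t-1)$ and $y-1 = t-1$; the algebra checks out. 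You also correctly flag the divisibility hypothesis that the survey statement elides, and the Dirichlet-plus-interpolation step that turns agreement at infinitely many admissible $q$ into the polynomial identity. The one substantive shortfall is that Parts 1 and 2 are only gestured at: the observation that $T_B$ over $\mathbb{S}^1$ splits into $m(B)$ connected subtori of dimension $d-r(B)$ is indeed the right starting point, but the actual Zaslavsky-type M\"obius computation on the poset of layers (Part 1) and the construction and cohomology of a toric wonderful model (Part 2) are invoked as black boxes rather than supplied. Given that the survey also leaves these to the references that is a defensible scope, but you should recognize that for those two parts you have an outline, not a proof.
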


\bigskip

As with ordinary Tutte polynomials, Theorem \ref{f.t:hypertoric}.3 may be used as a finite field method to compute arithmetic Tutte polynomials for some vector configurations and  toric arrangements. At the moment there are very few results along these lines. 

An important family that is well understood is the family of classical root systems, whose geometric properties motivate much of the theory of toric arrangements. Formulas for the arithmetic Tutte and characteristic polynomials of the classical root systems $A_n, B_n, C_n,$ and $D_n$ are given in \cite{f.ArdilaCastilloHenley}. Most of them resemble the formulas for the ordinary Tutte polynomials of the hyperplane arrangements ${\mathcal{A}}_n, \mathcal{BC}_n$, and $\mathcal{D}_n$ mentioned earlier. However, as should be expected, more subtle arithmetic issues arise -- especially in type $A$.

%
%\bigskip
%
%
%\noindent{{\textbf{Acknowledgments.}}} I would like to thank Joanna Ellis-Monaghan and Iain Moffatt for the invitation to write a chapter on Tutte polynomials of hyperplane arrangements for this survey.  
%I am also extremely grateful to my teachers Gian-Carlo Rota and Richard Stanley and to my collaborators and students, who helped me understand much of what I know about this topic.
%

\small

\bibliographystyle{amsplain}
\bibliography{references}

\end{document}